\newtheorem{thm}{Theorem}
\newtheorem{prop}{Proposition}
\newtheorem{definition}{Definition}
\newtheorem{rem}{Remark}
\newtheorem{lem}{Lemma}
\newtheorem{assum}{Assumption}
\title{Ensuring Transient Stability with Guaranteed Region of Attraction in DC Microgrids}
\author{Jianzhe~Liu,~\IEEEmembership{Member,~IEEE}, Yichen~Zhang,~\IEEEmembership{Senior Member,~IEEE},\\
Antonio~J.~Conejo,~\IEEEmembership{Fellow,~IEEE},
Feng~Qiu,~\IEEEmembership{Senior Member,~IEEE},
\thanks{J. Liu, Y. Zhang, and F. Qiu are with the Energy Systems Division, Argonne National Laboratory, Lemont, IL 60439, USA. Email: \{jianzhe.liu, yichen.zhang, fqiu\}@anl.gov. Argonne National Laboratory's work is based upon work supported by the U.S. Department of Energy’s Office of Energy Efficiency and Renewable Energy (EERE) under the Water Power Technologies Office Seedling Project Award Number 37893.}
\thanks{A. J. Conejo is with the Department of Integrated Systems Engineering and the Department of Electrical and Computer Engineering, The Ohio State University, Columbus, OH, USA. Email: conejo.1@osu.edu. }
}
\begin{document}

\maketitle
\begin{abstract}
    DC microgrids have promising applications in renewable integration due to their better energy efficiency when connecting DC components. However, they might be unstable since many loads in a DC microgrid are regulated as constant power loads (CPLs) that have a destabilizing negative impedance effect. As a result, the state trajectory displacement caused by abrupt load changes or contingencies can easily lead to instability. Many existing works have been devoted to studying the region of attraction (ROA) of a DC microgrid, in which the system is guaranteed to be asymptotically stable. Nevertheless, existing work either focuses on using numerical methods for ROA approximations that generally have no performance guarantees or cannot ensure a desired ROA for a general DC microgrid. To close this gap, this paper develops an innovative control synthesis algorithm to make a general DC microgrid have a theoretically guaranteed ROA, for example, to cover the entirety of its operating range regarding state trajectories. We first study the nonlinear dynamics of a DC microgrid to derive a novel transient stability condition to rigorously certify whether a given operating range is a subset of the ROA; then, we formulate a control synthesis optimization problem to guarantee the condition's satisfaction. This condition is a linear constraint, and the optimization problem resembles an optimal power flow problem and has a good computational behavior. Simulation case studies verify the validity of the proposed work.
\end{abstract}

\begin{IEEEkeywords}
Transient stability, region of attraction, nonlinear dynamics, constant power load, DC microgrid.
\end{IEEEkeywords}

\section{Introduction}

In shifting into a zero-carbon society, the next decades will witness an accelerated integration of distributed energy resources (DERs). Many DERs, such as PV units, plug-in electric vehicles, and energy storage systems, operate in direct current (DC)~\cite{justo2013ac}. For better energy efficiency, a DC microgrid is an appealing network configuration to interconnect these components in a low- or medium-voltage DC system~\cite{dragivcevic2015dc}. 

Despite the prospective benefits, a DC microgrid is subject to potential instability risks caused by constant power loads (CPLs)~\cite{liu2018robust}. CPLs are nonlinear components that introduce a negative impedance effect that weakens the system damping and affects the stability margin~\cite{li2019large}. 

To tackle the issue, both local stability (small-signal stability) and transient stability (large-signal stability) problems for DC microgrids have been studied in the literature. 
The former focuses on the system dynamical behaviors in a sufficiently small region surrounding an equilibrium. Linearization-based techniques are usually applied to develop the stability results: If the linearized system is stable at the equilibrium, then the nonlinear DC microgrid has a non-empty region of attraction (ROA) in the vicinity of the equilibrium in which asymptotic stability is ensured~\cite{slotine1991applied,isidori1995nonlinear}. Under such notion, one can then claim that after ``small'' disturbances a locally stable DC microgrid can return to equilibria~\cite{liu2018robust,tahim2015model,bara2016on}.

A variety of local stability conditions and stabilization controllers have been developed. In~\cite{middle1976input}, a frequency-domain model is used to develop a Nyquist stability criterion; in~\cite{tahim2015model}, the authors study the state-space model to develop stability conditions based on eigenvalue analysis of the Jacobian matrix; in~\cite{liu2018robust}, stability conditions based on linear matrix inequality (LMI) constraints are developed to conduct robust stability analysis against load uncertainty. 
Based on these results, various DC microgrid controllers have been developed for local stabilization~\cite{dragivcevic2015dc}, including primary droop control~\cite{josep2011hierar}, secondary distributed control~\cite{morstyn2015unified}, and tertiary control for economic dispatch~\cite{maulik2018stability}. 

Although local stability results can guarantee the existence of a ROA, they are usually unable to explicitly characterize its geometry. 
Consequently, they have limitations in determining whether the system can still retain asymptotic stability when significantly deviated from the equilibrium to a given point in the state space. 
This knowledge, however, is critical for understanding the system stability margin and ensuring system stability after ``large'' disturbances~\cite{marx2011large}. 

To deal with this issue, transient stability problems for DC microgrids have been studied. 
Existing works have developed many stability criteria for the nonlinear system, including those based on Lyapunov's direct method~\cite{chang2020large,herrera2015stability,babaiahgari2020dynamic,xu2019robust}, Brayton-Moser's mixed potential method~\cite{marx2011large,jiang2019conservatism}, bifurcation theory~\cite{tahim2014modeling}, and singular perturbation theory~\cite{gui2021large}. 
First, stability criteria have been developed to certify whether a given operating set is a subset of the ROA, then a ROA approximation can be obtained by iteratively applying stability analysis results to an enlarging candidate set. 
In~\cite{herrera2015stability,babaiahgari2020dynamic}, iterative algorithms are developed to apply Lyapunov stability conditions in each step; if the conditions are satisfied, a candidate set is enlarged to enter the next step; otherwise, an estimation of the ROA is considered final. 
Second, stability conditions have been applied to control design as well~\cite{gui2021large,zhang2019finite,chang2020large,xu2019robust}. 
These designs usually apply stability conditions to the closed-loop system to obtain transient performance guarantees~\cite{gui2021large,xu2019robust}.
For instance, in~\cite{zhang2019finite}, a decentralized linear controller is developed, and through Lyapunov stability analysis it is shown that when the control parameters are sufficiently large the system ROA must be of the desired size;
in~\cite{chang2020large}, the mixed potential method is applied to provide conditions on the system parameters to ensure transient stability.

Despite the rich literature on DC microgrid stability analysis and control, existing works have the following limitations. First, local stability results generally cannot be directly extended to transient stability problems due to the linearized dynamics~\cite{tahim2015model,bara2016on,lu2015stability}. Second, although some works have tackled the nonlinearity and developed ROA estimation methods, they generally rely on iterative algorithms that have no performance guarantee over the estimated ROA~\cite{herrera2015stability,babaiahgari2020dynamic}. Consequently, they may not be able to provide insights into critical operational problems like whether or not a DC microgrid is stable for the entire operating range. Third, existing stabilization control works have developed conditions to certify a ROA, but a systematic control synthesis approach is still needed to pinpoint the control parameters that modify the geometry of a ROA to contain a certain operating set~\cite{zhang2019finite,chang2020large} or achieve satisfactory transient and steady-state performance~\cite{gui2021large,xu2019robust}. Fourth, most existing works either focus on a single converter~\cite{gui2021large} or the specific DC microgrid configuration with only one common DC bus~\cite{zhang2019finite,chang2020large,xu2019robust}. There are insufficient studies on DC microgrids with general topology to account for a wider range of applications. Fifth, although some existing works have discussed the impacts of uncertainties in system parameters on transient stability~\cite{xu2019robust,gui2021large,chang2020large}, the control methods to withstand uncertainties in loading conditions are not adequately studied. 

This paper closes these research gaps by developing a systematic control synthesis algorithm to ensure that a general DC microgrid has a theoretically guaranteed ROA. To do so, we first transform the nonlinear dynamical model of the DC microgrid into an equivalent form that resembles a linear parameter varying (LPV) system if the nonlinear function is considered as a superimposed unknown parameter. Leveraging its special structure, we derive a stability condition on the nonlinearity to certify that a given set is contained in the system ROA. This condition is a linear constraint with respect to system equilibria. Such simple structure makes it amenable for control synthesis. We formulate an optimization problem reminiscent of an optimal power flow (OPF) problem to design the system equilibria accordingly. Owing to the problem structure, existing OPF results, such as suitable solvers, robust formulations to account for load uncertainties, and convex relaxation formulations, can be applied to enhance the computational performance. Note that security constrained OPF problems with stability constraints have been studied for AC power systems~\cite{zarate2009securing,zarate2010opf,capitanescu2011state}. These works either rely on time-domain simulations to derive the stability margins or develop stability constraints based on the one-machine-infinite-bus model~\cite{capitanescu2011state,abhyankar2017solution}. Therefore, they might not directly apply to a DC microgrid with general topology to ensure a certain ROA.

Considering the above literature review, the contributions of this paper include:
\begin{itemize}
    \item We develop a novel control synthesis approach to ensure that a general DC microgrid has a guaranteed ROA. As a result, the method can be applied to ensure system transient stability in the entirety of the operating range.
    \item We develop DC microgrid stability conditions that reveal the relationship between system equilibria and transient stability.
    \item The proposed optimization problem for control parameter design resembles a classic OPF problem, which generally has a good computational behavior. We show as well that under mild conditions, this optimization problem can be transformed into a convex conic optimization problem to achieve better computational efficiency.
    \item The proposed method can be employed for DER dispatching or re-dispatching to ensure economic operations while guaranteeing transient stability.  
\end{itemize}

The rest of the paper is organized as follows: Section~\ref{sec:pre} introduces the notation and some fundamental concepts for DC microgrids; Section~\ref{sec:ps} states the problem; Section~\ref{sec:tran_stb} presents the main results; Section~\ref{sec:simu} presents simulation results to show the validity of the proposed work; and at last, Section~\ref{sec:conclu} concludes the paper and discusses future research directions.

\section{Preliminary}\label{sec:pre}
This section summarizes the notation of the paper and reviews the fundamentals of DC microgrids.
\subsection{Notation}
For a vector $v$, let $v_k$ represent its $k$-th entry. We say that square matrix $A$ is Hurwitz if the real parts of all its eigenvalues are negative. In the following we provide a list of the operators used in the papers:
\begin{itemize}
    \item \textbf{$[\cdot]$} yields a diagonal matrix with the vector's entries being the diagonal entries;
    \item $\varoslash$ yields the element-by-element division between two vectors;
    \item $\odot$ represents the element-wise product of two vectors;
    \item $\sup$ and $\inf$ find the element-wise supremum and infimum in the arguments;
    \item $\sqrt{\cdot}$ finds the element-wise square root of the vector in the argument.
    \item $\det\_{\mathrm{rootn}}(\cdot)$ yields the $n$-th root of the determinant of a semi-positive definite matrix
\end{itemize}


\subsection{DC Microgrid Model}\label{sec:pre:mdl}
In this paper we study a DC microgrid with $n_{\text{b}}$ buses and $n_{\text{t}}$ power lines as shown in Fig.~\ref{fig:dc_bus}. For the $n_{\text{b}}$ buses, there are $n_\text{s}$ source buses and $n_\ell$ load buses. 
In addition, let $n=n_{\text{b}}+n_\text{t}$. 
The network is represented by a resistance--inductance--capacitance (RLC) circuit, as shown in Fig.~\ref{fig:dc_inputs}. On each DC bus, let there be a composite shunt capacitor and resistor, and let each power line be composed of a line resistor and inductor. 



We consider that the loads are constant power loads (CPLs). For the $j$-th CPL, let its power output be $p_{\ell j} \leq 0$.  
In addition, we consider that each source bus is connected to a voltage source. For the $k$-th voltage source, let the voltage reference be $u_k$ and let $R_{\text{s}k}$ represent the composite terminal resistance, combining the virtual and physical resistance. 
Let $p_{\ell j}$ be a non-positive constant for simplicity. 

\begin{figure}[!ht]
\begin{subfigure}{.40\textwidth}
    \centering
    \includegraphics[width=0.7\textwidth]{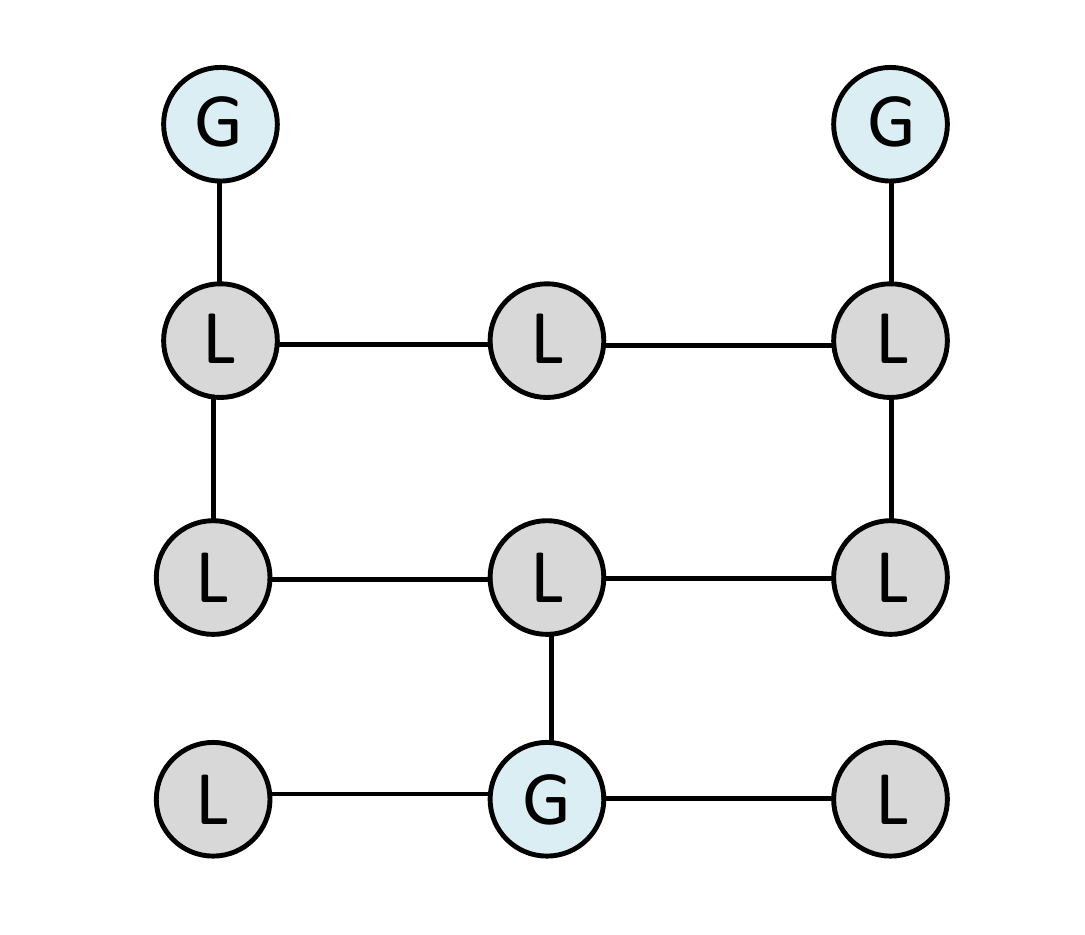}
    \caption{Example DC microgrid topology}\label{fig:dc_bus}
\end{subfigure}

\begin{subfigure}{.40\textwidth}
    \centering
    \includegraphics[width=0.8\textwidth]{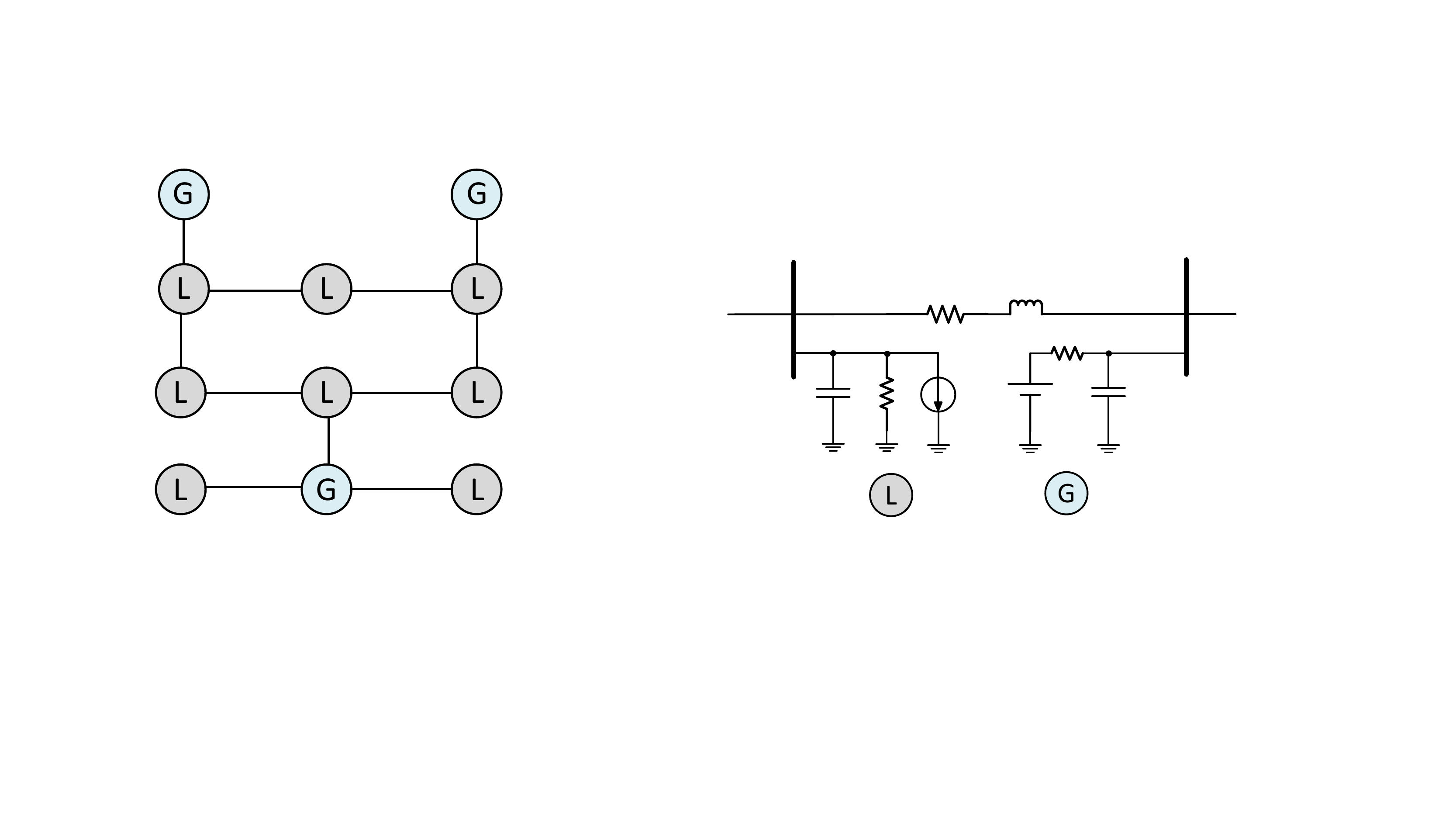}
    \caption{Detailed RLC circuit}\label{fig:dc_inputs}
\end{subfigure}
\caption{General DC microgrid topology and circuit model\label{fig:dc_topo}}
\end{figure}


\subsubsection{Dynamical Model}
Let $i \in \mathbb{R}^{n_{\text{t}}}$ be the line currents, and $v_{\text{s}} \in \mathbb{R}^{n_\text{s}}$ and $v_{\ell} \in \mathbb{R}^{n_\ell}$ be DC bus voltage of the source and load buses, respectively. Let $x \triangleq [i^\top,v_\text{s}^\top,v_\ell^\top]^\top$ be the state variable vector; $p_\ell \in \mathbb{R}^{n_\ell}$ is a vector of CPL power; and $u \in \mathbb{R}^{n^{\text{s}}}$ represents the voltage set-point of the sources.

The dynamics of a DC microgrid with CPLs are described by the following nonlinear dynamic system~\cite{liu2019optimal},
\begin{equation}\label{eq:mdl_main}
    D\dot{x}(t) = Ax(t) + B_2u\varoslash R_s + B_1 \left(p_\ell \varoslash C_1x(t) \right).
\end{equation}
where $D$ is a diagonal matrix whose diagonal entries are line inductances and bus capacitances; $A$ is composed by the addition of a negative diagonal sub-matrix and a skew-symmetric sub-matrix, the diagonal sub-matrix is determined by the system resistance; $B_1$ and $B_2$ are input matrices, which can be used to extract the source and load voltage $v_s = B_2^\top x$ and $v_\ell = B_1^\top x$. For simplicity, we let $C_1 = B_1^\top$ and $C_2 = B_2^\top$ in the rest of the paper. The last term on the right hand side of equation~\eqref{eq:mdl_main} is an additive nonlinear function contributed by the CPLs. We exemplify these system matrix expressions in Appendix~\ref{app:mdl}. It is worth mentioning that most DC microgrid models studied in the literature~\cite{dragivcevic2015dc,morstyn2015unified,herrera2015stability,liu2019optimal} conform to the general structure of~\eqref{eq:mdl_main} whose key feature is that the system dynamics have additive linear and nonlinear components. Our results can be easily applied to these models regardless of how the system matrices are parameterized.

\subsubsection{Steady-State Model}
The steady-state model describes the operating points and the power flows of a DC microgrid. 
Let $x^e$ be the system steady-state operating point, $v^e_\ell$ be the load bus voltage at such operating point, and $p_s$ be the vector of the power injections from the source buses. With the above definitions, the power flow equations are given by
\begin{equation}\label{eq:mdl_pf}
        p_s = \left[u\right]\left(Y_{\text{s}\ell}v^e_\ell +Y_{\text{ss}}u\right),
        \quad p_\ell = \left[ v^e_\ell\right] \left( Y_{\ell\ell}v^e_\ell +Y_{\ell\text{s}}u\right),
\end{equation}
where $Y_{\text{ss}}$, $Y_{\text{s}\ell}$, and $Y_{\ell\ell}$ are the conductance matrices describing the source-source, source-load, and load-load sub-networks, and $Y_{\text{s}\ell} = Y^\top_{\ell\text{s}}$. 

The solution of the power flow equations is the operating point of the system. It can be shown that the operating points are equivalent to the equilibria of~\eqref{eq:mdl_main}~\cite{liu2018existence}. In the rest of the paper, we will refer to the system equilibria as the operating points. 

\subsection{OPF}\label{sec:pre:opf}
To ensure an optimal economic operation, an optimal power flow (OPF) problem is usually solved to find an optimal operating point~\cite{gan2014optimal}.
An OPF problem is a constrained optimization problem aiming to achieve minimum generation fuel costs, minimum power losses on the lines, flattened voltage profiles, and others~\cite{dommel1968optimal}, 
by varying the voltage source setpoints and power injections as stated in~\eqref{eq:mdl_pf}.

An OPF is subject to the power flow equations and other operational constraints. Let $\mathcal{U}$, $\mathcal{X}^e$, and $\mathcal{P}_{\text{s}}$ be the operational constraint sets for $u$, $x^e$, and $p_\text{s}$, respectively. Specifically, the operating point $x^e$ has interval constraints given by $x^e_- \leq x^e \leq x^e_+$, where $x^e_-$ and $x^e_+$ are some lower and upper bounds representing the operational or physical constraints for the operating point.

An example OPF problem for generation cost reduction is formulated as follows,
\begin{subequations}\label{eq:prob_opf}
\begin{align}
     \text{\textbf{OPF:}}& && \min_{u,x^{\text{e}},v^\text{e}_\ell,p_\text{s}} c^\top p_s,\label{eq:opf_cf}\\
    \text{s. t.} & && \eqref{eq:mdl_pf}, \; v^\text{e}_\ell = C_1x^{\text{e}}, \label{eq:prob_opf:con1}\\
    & && u \in \mathcal{U},\; x^e \in \mathcal{X}^e,\; p_\text{s} \in \mathcal{P}_\text{s},\label{eq:prob_opf:con2}
\end{align}
\end{subequations}
where $c \in \mathbb{R}^{n_{\text{s}}}_+$ is a vector of generation cost coefficient. 

Extensive studies have been carried out to solve OPF problems like~\eqref{eq:prob_opf}. Recent advancements, such as OPF exact relaxation methods~\cite{jabr2011exploiting,molzahn2018survey}, have further shown that the global optimum of some large-scale networks can be found in polynomial time. Regarding DC networks, existing work~\cite{gan2014optimal, li2018optimal} has developed exact conic relaxation methods that have promising computational characteristics.

\subsection{Region of Attraction} \label{sec:pre:roa}
Aside from steady-state performance, transient stability is a key issue for DC microgrid operations. Most research works seek to determine a region of attraction (ROA) to ensure the convergence of system trajectories after a disturbance~\cite{slotine1991applied}.

Although the ROA of a nonlinear system like~\eqref{eq:mdl_main} is usually difficult to characterize, Lyapunov stability theory provides an effective method to use sub-level sets to inner approximate a ROA~\cite{herrera2015stability}. Let $V(x)$ be a continuously differentiable candidate Lyapunov function for a nonlinear system like~\eqref{eq:mdl_main}. A given set is a subset of the ROA if:
\begin{itemize}
    \item[a)] The set is contained in a sub-level set defined by $\{x:V(x)\leq \alpha\}$ for some positive scalar $\alpha$;
    \item[b)] $V(x)$ is positive definite within the sub-level set except at a unique equilibrium, in which it is zero; and
    \item[c)] the time derivative of $V(x)$ is negative definite for any state trajectory inside of the sub-level set except at the unique equilibrium where the derivative is zero.
\end{itemize}
Note that these conditions are sufficient conditions to indicate that 1) the given set is a subset of a positively invariant set such that if the system trajectory enters the latter at a given time, it will remain staying in it for the future time; and 2) the system trajectory will then asymptotically converge to a unique equilibrium.

\section{Problem Statement}\label{sec:ps}
The DC microgrid operation is subject to various disturbances. For example, a DC microgrid usually has a significant penetration of intermittent renewable resources and variable loads that may induce significant changes in the loading condition. 

With such disturbances, a DC microgrid might suddenly drift away from a given operating point. If the DC microgrid is still within the ROA of the post-disturbance system, it can safely return to the original operating point. It is thus desirable to ensure that the entire operating range of the microgrid is a subset of the ROA. 

Let $\mathcal{X}^\text{o} \triangleq [x^\text{o}_-,x^\text{o}_+]$ be the operating range, where $x^\text{o}_- \triangleq x^e - \Delta \bar{x}$ and $x^\text{o}_+ \triangleq x^e +  \Delta \bar{x}$ with $\Delta \bar{x} > 0$ being given bounds for the trajectory variations. To determine whether or not the post-disturbance system is transiently stable for the entire operating range, we need to certify that after the time instant that the system enters $\mathcal{X}^\text{o}$, it must asymptotically converge to $x^e$.

It is worth mentioning that the bounds for the trajectory variation, $\Delta \bar{x}$, is an arbitrarily chosen positive vector. For example, it can in fact be chosen to cover the entirety of the operating range or beyond to obtain some security margin.

This paper seeks to design the control $u$ for a DC microgrid described by~\eqref{eq:mdl_main} to ensure \textit{transient stability with a guaranteed ROA} under the definition below.

\begin{definition}~\label{def:tran_stab}
A DC microgrid governed by~\eqref{eq:mdl_main} is called transiently stable with a guaranteed ROA if the system ROA contains $\mathcal{X}^\text{o}$.
\end{definition}

In light of the above Definition~\ref{def:tran_stab}, let time $t=0$ represent either the moment that a fault is cleared or the moment that the system control is re-dispatched in response to an event. The system trajectory (with $t>0$) describes the post-disturbance evolution.

The difficulties of the problem described above include: The ROA of a nonlinear system like~\eqref{eq:mdl_main} is difficult to characterize, and there is insufficient results on developing a systematic control synthesis approach to guarantee a given size of the ROA to cover a desired set. Moreover, checking the conditions described in Section~\ref{sec:pre:roa} is difficult for our problem in that set $\mathcal{X}^{\text{o}}$ is dependent on the design variable $u$. Specifically, the system operating point $x^e$ is a function of $u$, as discussed in Section~\ref{sec:pre:mdl}. If $u$ is a design variable to be determined, $\mathcal{X}^{\text{o}}$ is not known a priori to enable a direct application of the existing ROA estimation works. Therefore, a new stability condition needs to be developed under this circumstance. In addition, although heuristic methods or iterative algorithms can be applied to vary $u$ to change the ROA, they may entail considerable computational burden and are generally not suitable to provide theoretical performance guarantees.

In the following section, we develop a novel stability condition and a scalable systematic control synthesis approach to solve this problem.

\begin{rem}
The operating range $\mathcal{X}^\text{o}$ is a hypercube. Practical experience indicates that the power system operating range is usually defined in such form. For example, the power system frequency operating range of the U.S. is usually set as 60$\pm 5\%$ Hz~\cite{grid101}, and the National Electrical Manufacturers Association recommends that motors to have a voltage variation range of $\pm$10\% of the rated voltage~\cite{voltbound}.
\end{rem}


\section{Transient Stability Analysis and Control}\label{sec:tran_stb}
The proposed approach to ensure transient stability with a guaranteed ROA contains three steps: 1) We re-arrange system~\eqref{eq:mdl_main} into an equivalent homogeneous-like system where the nonlinear component can be superimposed by a parameter uncertainty; 2) leveraging this special structure, we derive a bound on the parameter uncertainty to make the homogeneous-like system stable; 3) we formulate an optimization problem to design a control that ensures that the nonlinear component always satisfies the derived bound when $x(0) \in \mathcal{X}^\text{o}$. We show that by doing so the system must be transiently stable with a guaranteed ROA.

We derive the stability condition under the following assumption:
\begin{assum}\label{assum:positive}
The initial load bus voltages are positive, i.e. \mbox{$C_1x(0) > 0$}.
\end{assum}
This is a mild condition in line with physical facts.
It is worth mentioning that we no longer need to make this assumption later into the control design process.

\subsection{Homogeneous-Like System}\label{sec:tran_stb:homo}
As discussed in Section~\ref{sec:ps}, tackling the system nonlinearity is one key difficulty of the problem. We first re-arrange model~\eqref{eq:mdl_main} into a homogeneous-like form in order to make it more amenable to our analysis.



Let the difference between system state and an equilibrium $x^e$ be $\Delta x \triangleq x - x^e$. With this change of coordinates, system~\eqref{eq:mdl_main} becomes:
\begin{equation}\label{eq:mdl_small_ini}
    D\Delta \dot{x}(t) = A\Delta x(t) + B_1\left[h(x^e,\Delta x(t))\right]C_1\Delta x(t),
\end{equation}
where $h(x^e,\Delta x) \triangleq -p_\ell \varoslash  \left(C_1(x^e + \Delta x)\odot C_1x^e\right)$. In the following, we denote $h(x^e,\Delta x)$ by $h$ for brevity. The term $B_1[h]C_1$ is still a nonlinear function, and it is right-multiplied by $\Delta x(t)$. System~\eqref{eq:mdl_small_ini} is then recast into the following homogeneous-like form when $h$ is treated as a superimposed parameter uncertainty:
\begin{align}\label{eq:mdl_small_sec}
    \Delta \dot{x}(t) =  \hat{A}(h)\Delta x(t), 
\end{align}
where $\hat{A}(h) = D^{-1}\left( A + B_1[h]C_1 \right)$. 
This special structure leads to meaningful implications. System~\eqref{eq:mdl_small_sec} is now a homogeneous linear parameter varying (LPV) system. For such systems, we can derive an interval bound on the parameter uncertainties to ensure stability. We show that the stability analysis assists us to develop stability conditions for the original nonlinear system.

Note that with the re-arrangement carried out, the operating point is shifted to the fixed point origin. Then, the ROA that we need to guarantee becomes
\begin{equation}
    \Delta \mathcal{X}^{\text{o}} \triangleq \{\Delta x \in \mathbb{R}^n:\Delta x \in [-\Delta \bar{x},\Delta \bar{x}]\},\nonumber
\end{equation}
which is a well-defined polytope. The transient stability problem that we are now concerned with is to ensure that $\Delta \mathcal{X}^\text{o}$ is contained in the ROA of~\eqref{eq:mdl_small_ini}.



\subsection{LPV Condition}\label{sec:tran_stb:lpv}
We first find a stability condition over $h$, taking into account that~\eqref{eq:mdl_small_sec} is treated as an LPV with its system matrix parameterized by the superposition parameter $h$. Then, we transform this condition into a transient stability condition for our nonlinear model. 

Recall that for a homogeneous linear system, the existence of a quadratic Lyapunov function is a necessary and sufficient stability condition. This allows us to employ a candidate quadratic Lyapunov function. Let $V(\Delta x(t)) \triangleq \Delta x(t)^\top P \Delta x(t)$, where $P \succ 0$ is a positive definite matrix. 

An interval constraint on $h$ can be found to keep the Hurwitz stable LPV system matrix. Finding an interval constraint is motivated by the following observation. Under Assumption~\ref{assum:positive} and the constraint that $x^e_- \leq x^e \leq x^e_+$, $h$ is bounded from both above and below: 
\begin{subequations}
\begin{align*}
 &h \geq -p_\ell \varoslash \left( C_1\left(x^e_+ +\Delta \bar{x} \right)\odot C_1x^e_+ \right)  \geq 0,\\
 &h \leq -p_\ell \varoslash \left( C_1\left(x^e_- -\Delta \bar{x} \right)\odot C_1x^e_- \right).
\end{align*}
\end{subequations}

From the above discussion, we seek to find the following interval set of $h$, $\mathcal{H} \triangleq \{ h: h \in [0,\beta h_{+}^{\text{o}}] \}$, to keep the system matrix stable, where $h_{+}^{\text{o}} > 0$ is an initial guess for the upper bound of $h$ and $\beta > 0$ is a scaling factor to vary the size of $\mathcal{H}$. The largest $\mathcal{H}$ that makes Hurwitz stable matrix $\hat{A}$ can be arbitrarily approximated by the solution of the following generalized eigenvalue problem (GEVP):
\begin{subequations}\label{eq:prob_gevp}
\begin{alignat}{2}
    &\textbf{GEVP:}\qquad \max_{P \succ 0,\hat{P},\beta > 0,\tau > 0} \beta,\quad \text{s. t.}\\
    &\left[ 
    \begin{array}{cc}
        \hat{P}A\!+\!A^\top \hat{P}\!+\!P &  \hat{P}B_1\!+\!\frac{1}{2}\beta \tau C_1^\top [h_{+}^{\text{o}}] \\
        B_1^\top \hat{P}\! + \! \frac{1}{2}\beta \tau[h_{+}^{\text{o}}]C_1 & -\tau I
    \end{array}
    \right] \!\! \preceq \!\! 0,\label{eq:cond23_lyap}\\
    &\hat{P} = PD^{-1}, \label{eq:cond23_misc}
\end{alignat}
\end{subequations}
where $\tau \in \mathbb{R}$ is a scalar, $I \in \mathbb{R}^{n^\ell \times n^\ell}$ is an identity matrix, and $\hat{P} \in \mathbb{R}^{n\times n}$. Since GEVP~\eqref{eq:prob_gevp} is quasi-convex, we can find an approximate global optimum with an arbitrary discrepancy by solving convex problems~\cite[Chapter 4.2.5]{boyd2004convex}. Constraint~\eqref{eq:cond23_lyap} is a stability condition that checks whether $\hat{A}(h)$ is Hurwitz stable for all $h \in \mathcal{H}$. The justification is given in Appendix~\ref{app:con_stb}.



\begin{lem}\label{lem:lpv}
For a given $h^\text{o}_+ > 0$, if $\beta$ and $P$ are the solution of GEVP~\eqref{eq:prob_gevp}, $V(x) = \Delta x^\top P\Delta x$ is a common Lyapunov function for~\eqref{eq:mdl_small_sec} with a negative definite time derivative for all $h \in \mathcal{H}$, where $\mathcal{H} = \{ h: h \in [0,\beta h_{+}^{\text{o}}] \}$.
\end{lem}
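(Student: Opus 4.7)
The plan is to verify the two defining properties of a common Lyapunov function for~\eqref{eq:mdl_small_sec}: (i) positive definiteness of $V$, and (ii) negative definiteness of $\dot V$ uniformly over $h \in \mathcal{H}$. Property (i) is immediate because $P \succ 0$ is a feasibility constraint of the GEVP.

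For (ii), using $\hat P = P D^{-1}$ from~\eqref{eq:cond23_misc}, a direct calculation along trajectories of~\eqref{eq:mdl_small_sec} yields
\begin{equation*}
\dot V = \Delta x^\top \bigl[\hat P A + A^\top \hat P + \hat P B_1 [h] C_1 + C_1^\top [h] B_1^\top \hat P\bigr]\Delta x.
\end{equation*}
Since the $(2,2)$ block of~\eqref{eq:cond23_lyap} is $-\tau I \prec 0$, a Schur-complement reduction of~\eqref{eq:cond23_lyap} produces
\begin{multline*}
\hat P A + A^\top \hat P + P + \tfrac{\beta}{2}\bigl(\hat P B_1 [h^\text{o}_+] C_1 + C_1^\top [h^\text{o}_+] B_1^\top \hat P\bigr) \\
{}+ \tfrac{1}{\tau}\hat P B_1 B_1^\top \hat P + \tfrac{\tau \beta^2}{4} C_1^\top [h^\text{o}_+]^2 C_1 \preceq 0.
\end{multline*}
It then remains to show that the $h$-dependent matrix in $\dot V$ is bounded above by the left-hand side minus $P$.

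To close this gap, I would decompose $[h] = \tfrac{\beta}{2}[h^\text{o}_+] + [\tilde h]$; since each $h_k$ lies in $[0,\beta h^\text{o}_{+,k}]$, the residual obeys $[\tilde h]^2 \preceq \tfrac{\beta^2}{4}[h^\text{o}_+]^2$. Applying Young's inequality $XY + Y^\top X^\top \preceq \tau^{-1} X X^\top + \tau Y^\top Y$ with $X = \hat P B_1$ and $Y = [\tilde h] C_1$ gives
\begin{equation*}
\hat P B_1 [\tilde h] C_1 + C_1^\top [\tilde h] B_1^\top \hat P \preceq \tfrac{1}{\tau}\hat P B_1 B_1^\top \hat P + \tfrac{\tau \beta^2}{4} C_1^\top [h^\text{o}_+]^2 C_1.
\end{equation*}
Substituting the decomposition into $\dot V$ and chaining this bound with the Schur-complemented inequality above yields $\dot V \le -\Delta x^\top P \Delta x < 0$ for every nonzero $\Delta x$ and every $h \in \mathcal{H}$, which establishes (ii).

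The main obstacle is the bookkeeping that forces the Young's bound to match the Schur-complemented LMI term by term: the pivot must be chosen as the midpoint $\tfrac{\beta}{2}[h^\text{o}_+]$ of the hypercube $[0,\beta h^\text{o}_+]$, so that the residual square is controlled by $\tfrac{\beta^2}{4}[h^\text{o}_+]^2$, matching the coefficient $\tfrac{\tau\beta^2}{4}$ on $C_1^\top [h^\text{o}_+]^2 C_1$; and the Young parameter must be reused as exactly the GEVP scalar $\tau$, so that the $\tau^{-1}\hat P B_1 B_1^\top \hat P$ term also aligns. Once these alignments are fixed, the remainder of the argument is mechanical.
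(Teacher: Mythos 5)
Your proof is correct, but it reaches the conclusion by a different route than the paper. The paper (Appendix A) works with the lifted vector $z = [\Delta x^\top,\ ([h]C_1\Delta x)^\top]^\top$, encodes the sector bound $0 \le h_k \le \beta h^{\text{o}}_{+,k}$ as the single quadratic constraint $\left([h]C_1\Delta x - \beta[h^{\text{o}}_+]C_1\Delta x\right)^\top[h]C_1\Delta x \le 0$, and invokes the S-procedure with multiplier $\tau$ to conclude that \eqref{eq:cond23_lyap} implies $\Delta x^\top\!\left(P\hat A(h) + \hat A^\top(h)P + P\right)\Delta x \le 0$; the LMI is then read off directly as $M_0 - \tau M_1 \preceq 0$. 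You instead eliminate the $(2,2)$ block of \eqref{eq:cond23_lyap} by a Schur complement and recover the same bound by splitting $[h]$ about the midpoint $\tfrac{\beta}{2}[h^{\text{o}}_+]$ of the interval and absorbing the residual with Young's inequality at parameter $\tau$. The two derivations are equivalent in strength here — the single-constraint S-procedure is exactly the completion of squares you carry out by hand, and both deliver $\dot V \le -\Delta x^\top P\Delta x < 0$ for $\Delta x \ne 0$ uniformly over $h \in \mathcal{H}$. What your version buys is self-containedness: it needs no S-procedure lemma and makes explicit how each term of \eqref{eq:cond23_lyap} is consumed (the cross term by the midpoint shift, the $\tau^{-1}\hat P B_1 B_1^\top \hat P$ and $\tfrac{\tau\beta^2}{4}C_1^\top[h^{\text{o}}_+]^2C_1$ terms by Young's inequality). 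What the paper's version buys is transparency about where the LMI comes from in the first place, which matters for the GEVP formulation. One cosmetic remark: writing $A^\top\hat P$ and $C_1^\top[\tilde h]B_1^\top\hat P$ for the transposed terms tacitly treats $\hat P = PD^{-1}$ as symmetric; the correct reading is $A^\top D^{-1}P = A^\top\hat P^\top$, but since the paper uses the same abuse of notation in \eqref{eq:cond23_lyap} and its appendix, and the quadratic forms are unaffected, this is not a gap.
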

Lemma~\ref{lem:lpv} shows the Hurwitz stability of $\hat{A}$ for any realization of $h$ in $\mathcal{H}$. 

\subsection{Nonlinear System Condition}\label{sec:tran_stb:non}
The condition on the LPV system allows us to develop transient stability conditions for the original nonlinear system. Recall that to ensure that the set $\Delta \mathcal{X}^\text{0}$ is a subset of the ROA, we can check the three conditions stated in Section~\ref{sec:pre:roa}. 

\subsubsection{Condition a)} This condition is essentially to check whether or not there exists a positive scalar $\alpha$ such that $\Delta \mathcal{X}^{\text{o}}$ is a subset of a sub-level set, $\mathcal{S}(P,\alpha) = \{\Delta x: \Delta x^\top P \Delta x \leq \alpha\}$. Given $P$ as the solution of GEVP~\eqref{eq:prob_gevp}, the corresponding set-covering problem is formulated as follows: 
\begin{subequations}\label{eq:con_set}
    \begin{alignat}{2}
        \textbf{Set-Covering:}& \quad &&\min_{\alpha > 0} \alpha \\
         \text{s. t.}& &&\left( \Delta \mathcal{X}^{\text{o}}_k \right)^\top P \Delta \mathcal{X}^{\text{o}}_k \leq \alpha,\, k = 1,\cdots\, ,m,\label{eq:cond1_LMI}
    \end{alignat}
\end{subequations}
where $\Delta \mathcal{X}^{\text{o}}_k$ is the $k$-th vertex of $\Delta \mathcal{X}^{\text{o}}$, and $m$ is the number of vertices. Set-Covering problem~\eqref{eq:con_set} is a convex problem with convex quadratic inequalities owing to the positive definiteness of $P$. Moreover, since $\Delta \mathcal{X}^\text{o}$ is a convex polytope with interval constraints, and $\mathcal{S}$ is an ellipsoid, we only need to make sure that all the vertices of $\Delta \mathcal{X}^\text{o}$ are contained in $\mathcal{S}$ to let $\Delta \mathcal{X}^\text{o}$ be a subset of $\mathcal{S}$. 
 
The computational efficiency of~\eqref{eq:con_set} can be significantly improved by leveraging the geometric structure of $\Delta \mathcal{X}^\text{o}$. Noting that $\Delta \mathcal{X}^\text{o}$ involves symmetric interval constraints, we can first find a minimum-volume symmetric ellipsoid to contain this set, and then find a sub-level set to contain the symmetric ellipsoid. Each of the resulting set-covering problems involves only one LMI. Note that the former problem is equivalent to enforcing that an arbitrary vertex of $\Delta \mathcal{X}^\text{o}$ is included in the symmetric ellipsoid, and the latter problem involves a well-known ellipsoid-covering problem~\cite{boyd2004convex}.

Moreover, we formulate a co-design problem that not only combines the aforementioned two set-covering LMI problems but also finds $\mathcal{H}$ and $\mathcal{S}$ together:

\begin{subequations}\label{eq:prob_codesign}
    \begin{alignat}{2}
        \textbf{Co-Design:}& \quad &&\max_{P\succ 0,\hat{P},\gamma > 0,\tau > 0} \det\_{\mathrm{rootn}} (\hat{P})\label{eq:prob_codesign_cost} \\
         \text{s. t.}& &&\left( \Delta \mathcal{X}^{\text{o}}_k \right)^\top [\gamma] \Delta \mathcal{X}^{\text{o}}_k \leq 1, \; P \preceq [\gamma],\label{eq:prob_codesign:con1}\\
         & && ~\eqref{eq:cond23_lyap},~\eqref{eq:cond23_misc},
    \end{alignat}
\end{subequations}

\hspace{-0.1in}where $k$ is an arbitrary integer in $[1,m]$, and $\gamma \in \mathbb{R}^{n}_+$. Constraint~\eqref{eq:prob_codesign:con1} involves the two set-covering LMIs: 1) we check whether an arbitrary vertex of~$\Delta \mathcal{X}^\text{o}$ is contained in a symmetric ellipsoid defined by~$\{\Delta x: \Delta x^\top [\gamma] \Delta x \leq 1\}$; 
and 2) the constraint $P \preceq [\gamma]$ ensures that the symmetric ellipsoid is a subset of a sub-level set $\mathcal{S}(P,1) = \{\Delta x: \Delta x^\top P \Delta x \leq 1\}$. 
The cost function~\eqref{eq:prob_codesign_cost} seeks to minimize the volume of the sub-level set $\mathcal{S}(P,1)$. For brevity, the argument of $\mathcal{S}(P,1)$ is dropped in the remainder of the paper.

Unlike GEVP~\eqref{eq:prob_gevp}, Co-Design problem~\eqref{eq:prob_codesign} treats $\beta$ as a parameter, which makes it a convex LMI problem. One can use a line-search method to gradually increase the value of $\beta$ from an initial guess to find the largest $\beta$ that makes~\eqref{eq:prob_codesign} feasible. If~\eqref{eq:prob_codesign} is feasible for some $\beta$, $\mathcal{H}$ and $\mathcal{S}$ can be obtained subsequently. 

\begin{definition}\label{def:hs}
If problem~\eqref{eq:prob_codesign} is feasible for some $\beta > 0$, and $P \succ 0$ is a solution of such problem, we call $\mathcal{H} = \{h:h\in [0,\beta h^\text{o}_+]\}$ the \textit{LPV-stability set}, and $\mathcal{S} = \{\Delta x: \Delta x^\top P \Delta x \leq 1\}$ the \textit{minimum sub-level set}.
\end{definition}


\subsubsection{Condition b)} This condition is trivial to check after the change of coordinates and the choice of the candidate Lyapunov function. 

\subsubsection{Condition c)} As for the last condition, the stability analysis results for the LPV system provides the following intuitive insight: if $V(\Delta x)$ is a common Lyapunov function that guarantees the Hurwitz stability of LPV system matrix $\hat{A}$ so long as the parameter uncertainty $ h \in \mathcal{H}$, and if we can ensure that any $\Delta x \in \mathcal{S}$ keeps the nonlinear function $h \in \mathcal{H}$ for the nonlinear system~\eqref{eq:mdl_small_ini}, then condition c) must satisfy.

From the above intuition, we just need to derive conditions to ensure $h \in \mathcal{H}$, which is given by:
\begin{lem}\label{lem:con34}
Under Assumption~\ref{assum:positive}, given $x^e$, LPV-stability set $\mathcal{H}$, and minimum sub-level set $\mathcal{S}$, $h$ belongs to $\mathcal{H}$ for any $\Delta x \in \mathcal{S}$ if the following constraints hold:
\begin{subequations}\label{eq:con34}
    \begin{align}
    &\sup_{\Delta x \in \mathcal{S}}h \leq \beta h_+^{\text{o}},\label{eq:con34_sup}\\
    &\inf_{\Delta x \in \mathcal{S}} h \geq 0.\label{eq:con34_inf}
\end{align}
\end{subequations}
\end{lem}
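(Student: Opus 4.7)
The plan is to read off Lemma~\ref{lem:con34} almost directly from the definition of componentwise supremum and infimum, with the only real work being to confirm that $h$ is well-defined (i.e.\ the denominator $C_1(x^e+\Delta x)\odot C_1 x^e$ has no zero entries) throughout the minimum sub-level set $\mathcal{S}$. Once well-definedness is in hand, both inequalities in~\eqref{eq:con34} pass through a pointwise sandwich argument to give $h \in \mathcal{H}$ for every $\Delta x \in \mathcal{S}$.

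First I would fix an arbitrary $\Delta x \in \mathcal{S}$ and write the $j$-th entry of $h$ explicitly as $h_j = -p_{\ell j}/\bigl((C_1(x^e+\Delta x))_j \cdot (C_1 x^e)_j\bigr)$. Using Assumption~\ref{assum:positive} together with the hypothesis that $x^e$ lies in the admissible operating range (so $C_1 x^e > 0$ componentwise), I would then argue that the denominator cannot vanish for any $\Delta x\in\mathcal{S}$: if $(C_1(x^e+\Delta x))_j$ changed sign, continuity along a segment from the initial point would force it to cross zero, but at such a crossing $h_j$ would blow up to $+\infty$, contradicting the finiteness required for $\sup_{\mathcal{S}} h_j \le \beta h_{+,j}^{\mathrm{o}}$ in~\eqref{eq:con34_sup}. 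This justifies that $h$ is a continuous, well-defined vector function on $\mathcal{S}$.

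With well-definedness established, the conclusion follows in one line. Since $\Delta x \in \mathcal{S}$, by definition of the elementwise supremum and infimum,
\begin{equation*}
\inf_{\Delta x' \in \mathcal{S}} h(x^e,\Delta x') \;\le\; h(x^e,\Delta x) \;\le\; \sup_{\Delta x' \in \mathcal{S}} h(x^e,\Delta x'),
\end{equation*}
where the inequalities are understood componentwise. Applying~\eqref{eq:con34_inf} to the left inequality and~\eqref{eq:con34_sup} to the right gives $0 \le h(x^e,\Delta x) \le \beta h_+^{\mathrm{o}}$, which is exactly the statement $h \in \mathcal{H}$ by Definition~\ref{def:hs}.

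The main obstacle, and the only subtlety beyond routine bookkeeping, is the well-definedness step: Assumption~\ref{assum:positive} is only stated at $t=0$, whereas the lemma quantifies over all $\Delta x \in \mathcal{S}$. I would handle this by invoking continuity of each $h_j$ away from the zero set of its denominator, together with the fact that $\mathcal{S}$ is a connected set containing the origin $\Delta x = 0$ (where the denominator equals $(C_1 x^e)_j^2 > 0$); this rules out sign changes of $(C_1(x^e+\Delta x))_j$ over $\mathcal{S}$ and thereby licenses the sandwich inequality above.
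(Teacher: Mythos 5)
Your proof is correct and follows essentially the same (one-line) argument the paper relies on: the lemma is immediate from the definition of the elementwise $\sup$ and $\inf$, since any $\Delta x\in\mathcal{S}$ satisfies $\inf_{\mathcal{S}}h\le h\le\sup_{\mathcal{S}}h$ componentwise, and~\eqref{eq:con34_inf} and~\eqref{eq:con34_sup} then sandwich $h$ into $[0,\beta h_+^{\text{o}}]=\mathcal{H}$. Your extra well-definedness step (no zero crossing of $C_1(x^e+\Delta x)$ over the connected set $\mathcal{S}$) is a reasonable addition that the paper defers to the feasibility analysis of Lemma~\ref{lem:infty} in Appendix~\ref{app:lem:infty}, where the positivity of $C_1x^e+\Delta x^{C}_{\inf}$ is shown to be necessary and sufficient for~\eqref{eq:con34} to be satisfiable.
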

Recall that $h$ is a function of $x^e$ and $\Delta x$, and $\sup$ and $\inf$ are element-wise operators. These conditions are further transformed into a linear constraint in $x^e$, which is amenable for control design.

Let $\inf_{\Delta x \in \mathcal{S}}C_1\Delta x$ be denoted by $\Delta x^{C}_{\inf}$ for brevity. 
We first study the feasibility condition of~\eqref{eq:con34}:
\begin{lem}\label{lem:infty}
Under Assumption~\ref{assum:positive}, constraints~\eqref{eq:con34} is feasible for some $x^e$
if and only if there exists $x^e$ that satisfies \mbox{$C_1x^e+\Delta x^{C}_{\inf} > 0$}.
\end{lem}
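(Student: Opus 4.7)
The plan is to prove both directions of the equivalence separately. The main content lies in pinning the sign of $C_1 x^e$ via Assumption~\ref{assum:positive} for necessity, and in a scaling argument for sufficiency. Throughout I would exploit that $\mathcal{S}$ is compact and contains the origin, so $\Delta x^C_{\inf}\le 0$ and each component of $\Delta x^C_{\inf}$ is attained by some $\Delta x\in\mathcal{S}$.

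For \emph{necessity}, suppose $x^e$ makes~\eqref{eq:con34} feasible. Constraint~\eqref{eq:con34_sup} keeps $\sup_{\Delta x\in\mathcal{S}} h$ finite, so the denominator $C_1(x^e+\Delta x)\odot C_1 x^e$ of $h$ must stay uniformly bounded away from zero on the compact set $\mathcal{S}$, componentwise, at every index $j$ with $p_{\ell j}<0$. Evaluating at $\Delta x=0\in\mathcal{S}$ yields $(C_1 x^e)_j\neq 0$, and Assumption~\ref{assum:positive}, applied to any admissible initial state $x(0)=x^e+\Delta x$ with $\Delta x\in\mathcal{S}$, pins this sign to be positive, so $C_1 x^e>0$. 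Continuity of $C_1(x^e+\Delta x)$ on the connected set $\mathcal{S}$ combined with nonvanishing then forces $C_1(x^e+\Delta x)>0$ throughout $\mathcal{S}$. Taking the componentwise infimum—attained on the compact $\mathcal{S}$—and using the strict lower bound required to keep $\sup h$ finite delivers $C_1 x^e+\Delta x^C_{\inf}>0$.

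For \emph{sufficiency}, suppose some $x^e_0$ satisfies $C_1 x^e_0+\Delta x^C_{\inf}>0$. Since $\Delta x^C_{\inf}\leq 0$ we have $C_1 x^e_0>0$, and for every $\Delta x\in\mathcal{S}$, $C_1(x^e_0+\Delta x)\geq C_1 x^e_0+\Delta x^C_{\inf}>0$. Together with $-p_\ell\geq 0$, this makes the denominator of $h$ strictly positive on $\mathcal{S}$, giving $h\geq 0$ and hence~\eqref{eq:con34_inf}. For~\eqref{eq:con34_sup}, I would invoke a scaling argument: substitute $x^e=\lambda x^e_0$ for $\lambda>1$. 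The lower bound is only strengthened, while
\[
h(\lambda x^e_0,\Delta x)=-p_\ell\varoslash\bigl((\lambda C_1 x^e_0+C_1\Delta x)\odot \lambda C_1 x^e_0\bigr)
\]
decays uniformly as $O(1/\lambda^2)$ on the compact $\mathcal{S}$; so choosing $\lambda$ large enough ensures $\sup h\leq \beta h^o_+$, establishing feasibility.

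The \emph{main obstacle} is in the necessity direction: a purely algebraic analysis also admits the spurious configuration in which $C_1 x^e$ and $C_1(x^e+\Delta x)$ are componentwise negative, which likewise yields a positive denominator and nonnegative $h$. Ruling this case out—and thus upgrading the conclusion from $|C_1 x^e+\Delta x^C_{\inf}|>0$ to the signed statement $C_1 x^e+\Delta x^C_{\inf}>0$—is precisely the role played by Assumption~\ref{assum:positive}, via identifying perturbed equilibria $x^e+\Delta x$ with $\Delta x\in\mathcal{S}$ as admissible initial conditions and hence fixing $C_1 x^e>0$.
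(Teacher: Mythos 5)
Your proof is correct and follows essentially the same route as the paper's: necessity rests on the observation that if $C_1x^e+\Delta x^{C}_{\inf}$ fails to be positive in some component then (by convexity/connectedness of $\mathcal{S}$, which contains the origin, together with Assumption~\ref{assum:positive}) the denominator of $h$ must vanish somewhere in $\mathcal{S}$, forcing $\sup h=\infty$; sufficiency rests on pushing the equilibrium load voltages high enough. The only cosmetic differences are that the paper argues necessity by contradiction and, for sufficiency, exhibits the explicit quadratic-root threshold (which it then reuses to derive the linear constraint~\eqref{eq:con34_sup_ln}) instead of your $\lambda\to\infty$ scaling argument.
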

The justification of Lemma~\ref{lem:infty} is given in Appendix~\ref{app:lem:infty}. If this condition is satisfied, inequality~\eqref{eq:con34_inf} is trivial to check. The implication of Lemma~\ref{lem:infty} is consistent with engineering practice noting that a negative voltage trajectory is not allowed. Under this condition, $\sup_{\Delta x \in \mathcal{S}}h$ is given by:
\begin{equation}
    \sup_{\Delta x \in \mathcal{S}} h = -p_\ell \varoslash \left( \left(C_1  x^e + \Delta x^{C}_{\inf}\right)\odot C_1x^e\right).\label{eq:con34_sup_qd}
\end{equation}
Then~\eqref{eq:con34_sup} is equivalent to the following linear inequality:
\begin{equation}
    C_1x^e \! \geq \! \underbrace{\frac{1}{2} \left( - \Delta x^{C}_{\inf} \! + \! \sqrt{\left[\Delta x^{C}_{\inf}\right]\Delta x^{C}_{\inf} \! - \! 4p_\ell\varoslash \beta h^\text{o}_+} \right)}_{x^{Ce}_-}.\label{eq:con34_sup_ln}
\end{equation}
The derivation is given in Appendix~\ref{app:sqrt}. 
Note that we define a composite parameter $x^{Ce}_-$ for the right-hand-side. It is a constant if $\mathcal{H}$ and $\mathcal{S}$ are given.
\subsubsection{Control-Amenable Stability Condition}
With the above results, we develop the control amenable condition to certify transient stability as follows:
\begin{prop}\label{prop:tran_stb}
Given $x^e$, system~\eqref{eq:mdl_small_sec} is transiently stable with a guaranteed ROA if a) problem~\eqref{eq:prob_codesign} is feasible for some $\beta>0$, and b) inequality~\eqref{eq:con34_sup_ln} is satisfied.
\end{prop}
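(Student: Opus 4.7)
The plan is to verify the three conditions of the ROA definition laid out in Section~\ref{sec:pre:roa}, using the Lyapunov candidate $V(\Delta x) = \Delta x^\top P \Delta x$ with $P$ obtained from the Co-Design problem~\eqref{eq:prob_codesign}. By Definition~\ref{def:tran_stab}, transient stability with a guaranteed ROA amounts to showing $\Delta \mathcal{X}^\text{o}$ is contained in the ROA of~\eqref{eq:mdl_small_ini}, which in turn is implied by conditions a)--c).

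First I would dispatch conditions a) and b). For condition a), the feasibility of~\eqref{eq:prob_codesign} supplies $P\succ 0$ and $\gamma>0$ together with the two set-covering LMIs in~\eqref{eq:prob_codesign:con1}: the first guarantees that every vertex of the symmetric hypercube $\Delta \mathcal{X}^\text{o}$ lies in the symmetric ellipsoid $\{\Delta x : \Delta x^\top [\gamma] \Delta x \leq 1\}$, and convexity then extends this containment to $\Delta \mathcal{X}^\text{o}$ itself; the second, $P \preceq [\gamma]$, yields $\Delta x^\top P \Delta x \leq \Delta x^\top [\gamma] \Delta x \leq 1$ for any $\Delta x$ in that ellipsoid, so that $\Delta \mathcal{X}^\text{o} \subseteq \mathcal{S}$ with $\alpha=1$. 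Condition b) is immediate: $P\succ 0$ makes $V$ positive definite, vanishing only at the shifted equilibrium $\Delta x = 0$.

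The substantive step is condition c). Here I would chain the lemmas already established. From hypothesis b) of the proposition, inequality~\eqref{eq:con34_sup_ln} holds; by the derivation leading to~\eqref{eq:con34_sup_ln} in Appendix~\ref{app:sqrt}, this is equivalent to~\eqref{eq:con34_sup} and in particular forces $C_1 x^e + \Delta x^C_{\inf} > 0$, which by Lemma~\ref{lem:infty} ensures~\eqref{eq:con34_inf} as well. Invoking Lemma~\ref{lem:con34} then yields $h(x^e, \Delta x) \in \mathcal{H}$ for every $\Delta x \in \mathcal{S}$. Treating $h$ as a frozen parameter in~\eqref{eq:mdl_small_sec}, Lemma~\ref{lem:lpv} delivers the inequality $\dot V(\Delta x) = \Delta x^\top (\hat{A}(h)^\top P + P\hat{A}(h)) \Delta x < 0$ for every admissible value of $h$. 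Because for the true nonlinear dynamics~\eqref{eq:mdl_small_ini} the instantaneous value of $h(x^e, \Delta x(t))$ belongs to $\mathcal{H}$ whenever the trajectory is in $\mathcal{S}$, the common Lyapunov property applies pointwise along the trajectory, so $\dot V < 0$ strictly on $\mathcal{S}\setminus\{0\}$.

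The main obstacle I anticipate is a subtle invariance argument: Lemma~\ref{lem:con34} only certifies $h \in \mathcal{H}$ for $\Delta x \in \mathcal{S}$, whereas I need the trajectory to \emph{remain} in $\mathcal{S}$ so this certification is sustained through time. The way around this is standard: condition a) places the initial state $\Delta x(0) \in \Delta \mathcal{X}^\text{o}$ inside the sub-level set $\mathcal{S} = \{V \leq 1\}$, and by the strict inequality $\dot V < 0$ on $\mathcal{S}\setminus\{0\}$, the sub-level set is positively invariant; consequently $\Delta x(t) \in \mathcal{S}$ for all $t \geq 0$, $h$ stays in $\mathcal{H}$ for all $t\geq 0$, and $V$ is a bona fide Lyapunov function yielding asymptotic convergence $\Delta x(t)\to 0$. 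Combining this with a) and b) discharges all three conditions of Section~\ref{sec:pre:roa}, proving that $\Delta \mathcal{X}^\text{o}$ is contained in the ROA and thus establishing the proposition.
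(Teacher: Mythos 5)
Your proposal is correct and follows essentially the same route as the paper: feasibility of the Co-Design problem discharges conditions a) and b) via the set-covering LMIs, and inequality~\eqref{eq:con34_sup_ln} combined with Lemmas~\ref{lem:infty}, \ref{lem:con34}, and~\ref{lem:lpv} discharges condition c). Your explicit treatment of the positive-invariance step (that $\dot V<0$ on $\mathcal{S}\setminus\{0\}$ keeps the trajectory in $\mathcal{S}$ so the certification $h\in\mathcal{H}$ is sustained along the trajectory) is a detail the paper leaves implicit in the Section~\ref{sec:pre:roa} conditions, and it is a worthwhile addition.
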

Since problem~\eqref{eq:prob_codesign} is feasible for some $\beta > 0$, we let $P\succ 0$ be the solution. Thus, the LPV-stability set $\mathcal{H}$ and the minimum sub-level set $\mathcal{S}$ as defined in Definition~\ref{def:hs} are available. This implies that $\Delta \mathcal{X}^\text{o}$ lies in the sub-level set $\mathcal{S}$ of $V(\Delta x)$ owing to the set-covering constraint~\eqref{eq:prob_codesign:con1} (condition a)).

It is clear that satisfying~\eqref{eq:con34_sup_ln} implies that the condition stated in Assumption~\ref{assum:positive} and the feasibility condition derived in Lemma~\ref{lem:infty} are both satisfied. Hence, Lemma~\ref{lem:con34} certifies that for any realization of $\Delta x$ in $\mathcal{S}$ the corresponding $h$ belongs to $\mathcal{H}$.
Owing to the fact that problem~\eqref{eq:prob_codesign} is feasible, GEVP~\eqref{eq:prob_gevp} is feasible as well. Lemma~\ref{lem:lpv} certifies that $\dot{V}(\Delta x)$ is negative definite in $\mathcal{S}$ except at the origin where it is zero (condition c)).

As indicated above, condition b) always holds due to the change of coordinates and the definition of the Lyapunov function. Thus, all three conditions discussed in Section~\ref{sec:pre:roa} are satisfied, which shows that $\Delta \mathcal{X}^\text{o}$ is a subset of the system ROA.

The condition we derive in Proposition~\ref{prop:tran_stb} indicates that we can design the system operating point to ensure transient stability with a guaranteed ROA. The structure of this single linear condition is easy to check and is amenable for control design. 

\subsection{Control Synthesis}\label{sec:tran_stb:opf}
In this subsection, we show how the condition derived in Proposition~\ref{prop:tran_stb} can be utilized to synthesize a transient stabilization control. The design problem resembles OPF~\eqref{eq:prob_opf}, and thus can be efficiently solved by existing solvers.

Note that the condition derived in Proposition~\ref{prop:tran_stb} characterizes a specific area for steady-state system operation such that any point lying within it is transiently stable with a guaranteed ROA. As discussed in Section~\ref{sec:pre:opf}, we can design $u$ to drive the operating point to a desired region. Subsequently, the following control synthesis problem is formulated:
\begin{subequations}\label{eq:prob_stb_opf}
\begin{align}
    \text{\textbf{Control Synthesis:}} & && \min_{u,x^{\text{e}},v^\text{e}_\ell, p_{\text{s}}} c^\top p_{s} ,\label{eq:opf_tran_stb:cf}\\
     \text{s. t.}& && \eqref{eq:prob_opf:con1},\; \eqref{eq:prob_opf:con2},\;\eqref{eq:con34_sup_ln}.
\end{align}
\end{subequations}

Control synthesis problem~\eqref{eq:prob_stb_opf} seeks to optimize the system operating point to 1) reduce operational costs, 2) satisfy operational constraints, and 3) ensure transient stability with a guaranteed ROA. 
The entire analysis and control design process can be summarized in the following algorithm:
\begin{algorithm}
	\caption{Design control input $u$ to make system~\eqref{eq:mdl_main} transiently stable with a guaranteed ROA}\label{alg:main}
	\textbf{Input:} System matrices: $A$, $B_1$, $B_2$, $D$; constraint sets: $\mathcal{X}^e$, $\mathcal{U}$, and $\mathcal{P}^{\text{s}}$; ROA needed to guarantee: $\Delta \mathcal{X}^\text{o}$; load profile: $p_\ell$; and initial guess: $h^\text{o}_+$ and $\beta$.\\
	\textbf{Output:} Control $u$.
	\begin{algorithmic}[1]
		\item[Step 1:] Obtain LPV-stability set $\mathcal{H}$ and minimum sub-level  
		\item[] \quad \; set $\mathcal{S}$ using a line-search method to find the largest $\beta$
		\item[] \quad \; that makes Co-Design problem~\eqref{eq:prob_codesign} feasible.
		\item[Step 2:] Find $\Delta x^C_{\inf} = \inf_{\Delta x \in \mathcal{S}}C_1\Delta x$.
		\item[Step 3:] Calculate $x^{Ce}_-$ using~\eqref{eq:con34_sup_ln}.
		\item[Step 4:] Find $u$ by solving control synthesis problem~\eqref{eq:prob_stb_opf}.
	\end{algorithmic}
\end{algorithm}

The performance guarantee of Algorithm~\ref{alg:main} is given as follows:
\begin{thm}\label{thm:tran_stb}
Any feasible solution $u$ obtained using Algorithm~\ref{alg:main} ensures transient stability with a guaranteed ROA for system~\eqref{eq:mdl_main}.
\end{thm}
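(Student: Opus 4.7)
The plan is to invoke Proposition~\ref{prop:tran_stb} directly by showing that each step of Algorithm~\ref{alg:main} supplies exactly one of its hypotheses. First I would note that Step~1 returns the LPV-stability set $\mathcal{H}$ and the minimum sub-level set $\mathcal{S}$ only if the line search over $\beta$ succeeds, i.e., only if Co-Design problem~\eqref{eq:prob_codesign} is feasible for some $\beta>0$. By Definition~\ref{def:hs}, this feasibility delivers a positive-definite matrix $P$ together with the scaling $\beta>0$, which is precisely hypothesis (a) of Proposition~\ref{prop:tran_stb}.

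Next I would observe that Steps~2 and 3 merely compute the constant $x^{Ce}_-$ from quantities produced in Step~1 together with the load profile $p_\ell$; no stability argument is needed there. Any feasible solution of control synthesis problem~\eqref{eq:prob_stb_opf} in Step~4 must then satisfy the power-flow equations~\eqref{eq:prob_opf:con1}, the operational constraints~\eqref{eq:prob_opf:con2}, and the linear inequality~\eqref{eq:con34_sup_ln}. Because the operating points defined by~\eqref{eq:mdl_pf} coincide with the equilibria of~\eqref{eq:mdl_main} (as recalled in Section~\ref{sec:pre:mdl}), the returned pair $(u,x^e)$ constitutes a bona fide equilibrium of the closed-loop nonlinear system at which~\eqref{eq:con34_sup_ln} holds, giving hypothesis (b) of Proposition~\ref{prop:tran_stb}.

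With both hypotheses established, Proposition~\ref{prop:tran_stb} yields that $\Delta\mathcal{X}^\text{o}$ lies in the ROA of the shifted system~\eqref{eq:mdl_small_sec}. Undoing the change of variables $\Delta x = x - x^e$ then shows that $\mathcal{X}^\text{o}$ lies in the ROA of~\eqref{eq:mdl_main}, which is exactly transient stability with a guaranteed ROA in the sense of Definition~\ref{def:tran_stab}.

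The only subtlety I anticipate is not the logic of this chain but the temporal structure of Algorithm~\ref{alg:main}: Step~1 fixes $V$, $\mathcal{H}$, and $\mathcal{S}$ before the equilibrium $x^e$ is chosen, whereas Proposition~\ref{prop:tran_stb} is phrased per equilibrium. I would therefore emphasize that $P$, $\mathcal{S}$, and $\mathcal{H}$ depend only on $A$, $B_1$, $C_1$, $D$, on the initial guess $h^\text{o}_+$, and on the geometry of $\Delta\mathcal{X}^\text{o}$, but not on $x^e$; since the constraint~\eqref{eq:con34_sup_ln} enforced in Step~4 uses exactly these pre-computed quantities, the LPV stability certificate transfers uniformly to every feasible $x^e$ and no circularity arises. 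Beyond this bookkeeping, the proof is essentially an assembly of Lemma~\ref{lem:lpv}, Lemma~\ref{lem:con34}, Lemma~\ref{lem:infty}, and Proposition~\ref{prop:tran_stb}.
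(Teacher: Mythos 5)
Your proposal is correct and follows essentially the same route as the paper, which simply observes that Theorem~\ref{thm:tran_stb} is a direct consequence of Proposition~\ref{prop:tran_stb}: Step~1 certifies the feasibility of Co-Design problem~\eqref{eq:prob_codesign} (hypothesis (a)), and any feasible point of~\eqref{eq:prob_stb_opf} in Step~4 satisfies~\eqref{eq:con34_sup_ln} at a genuine equilibrium (hypothesis (b)). Your added remark that $P$, $\mathcal{S}$, and $\mathcal{H}$ are computed independently of $x^e$, so the certificate transfers uniformly to every feasible equilibrium without circularity, is a worthwhile clarification that the paper leaves implicit.
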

Theorem~\ref{thm:tran_stb} is a direct result from Proposition~\ref{prop:tran_stb}. It can be used directly as it only involves LMI problem Co-Design~\eqref{eq:prob_codesign}, convex problem $\Delta x^C_{\inf} = \inf_{\Delta x \in \mathcal{S}}C\Delta x$, and some basic algebraic calculations, aside from the control synthesis problem~\eqref{eq:prob_stb_opf}. Compared to a conventional OPF~\eqref{eq:prob_opf}, we only introduce one additional linear constraint into~\eqref{eq:prob_stb_opf}. Therefore, the tractability of the proposed problem is similar to that of an OPF, and existing OPF solution algorithms can be applied to our problem as well. Additionally, it is worth mentioning that it suffices to find any $\beta$ that makes Co-Design problem~\eqref{eq:prob_codesign} feasible in Step 1 to ensure the guaranteed ROA, although a line-search method is utilized to find the largest $\beta$ for the purpose of conservativeness reduction.

\subsection{Discussions}\label{sec:tran_stb:ext}
We discuss practical applications and potential theoretical extensions of the proposed work in this subsection.

\subsubsection{Convex Relaxation}
Given the structure of~\eqref{eq:prob_stb_opf}, OPF relaxation methods can be applied to reduce the computational complexity. OPF convex relaxation approaches seek to relax the nonlinear power flow equations into convex counterparts in order to transform a non-convex OPF problem into a convex problem. For example, the OPF problems can be relaxed into second order cone programming and semidefinite programming problems~\cite{molzahn2018survey}. Under some conditions such relaxations are exact for DC microgrids~\cite{gan2014optimal,li2018optimal}. For example, ~\cite[Theorem~2]{li2018optimal} shows that under the condition that the DC bus voltage is subject to a uniform upper bound, the second order conic relaxation is exact. As our method only introduce a new lower bound on voltage constraints, such relaxation method is exact.

\subsubsection{Other Models}
The proposed work can include alternative component models. For example, the sources might have different operating strategies. Distributed control~\cite{morstyn2015unified} and decentralized feed-forward control~\cite{gui2021large} have been developed for DC microgrids. In addition, some sources may act as constant power sources (CPSs), like a PV unit operated at the maximum power point tracking mode, to inject power into the network. Our approach can be applied to these sources as well. Regarding most DC microgrid controllers, their closed-loop dynamics conform to the general structure of model~\eqref{eq:mdl_main}, and most of them need to design a voltage setpoint. Hence, our works can be extended for their design. Additionally, the inclusion of CPSs does not change the structure of the system model. Instead, only minor modifications to the proposed work are required. For example, we need to define a new lower bound of $h$ similar to $\beta h^{\text{o}}_+$ in $\mathcal{H}$. Our methodology can be easily extended to address such changes.

\section{Case Studies}\label{sec:simu}
In this section, we show the validity and relevance of the proposed work using simulations on realistic networks. The simulations are conducted on a laptop with an Intel Core i5-1.7GHz 8-core CPU and 16-GB of RAM. Algorithm~\ref{alg:main} is executed using CVX~\cite{cvx} and IPOPT~\cite{wachter2006implementation}. Dynamic simulations are conducted in MATLAB/Simulink.

We first use a one-bus DC microgrid to illustrate the conservativeness of the proposed technique, and then use a 14-bus DC microgrid to show the applicability and computational efficiency of the proposed technique.

\subsection{Conservativeness}\label{sec:onebus}
A one-bus DC microgrid is studied in this subsection to illustrate the conservativeness of the proposed methodology. The microgrid circuit and its parameters are given in Fig.~\ref{fig:rev1_topo} and Table~\ref{table:rev1_para}, respectively. The system matrices are provided in Appendix~\ref{app:mdl}. As shown in Fig.~\ref{fig:rev1_topo}, the microgrid has a voltage source supporting a CPL. The current and the DC bus voltage are the two state variables. We wish to ensure that the system is transiently stable with a maximum voltage and current fluctuation of 20 V and 20 A, respectively, from the operating point. It is worth mentioning that for the special case in which the system has only one source, the cost function of problem~\eqref{eq:prob_stb_opf} is meaningless and, therefore, is modified to minimize the value of $u$. Additionally, we make the inequality constraints nonbinding to prevent the steady-state performance requirements to impair the assessment of the transient performance.
    \begin{figure}[ht!]
    \centering
    \includegraphics[width=0.3\textwidth]{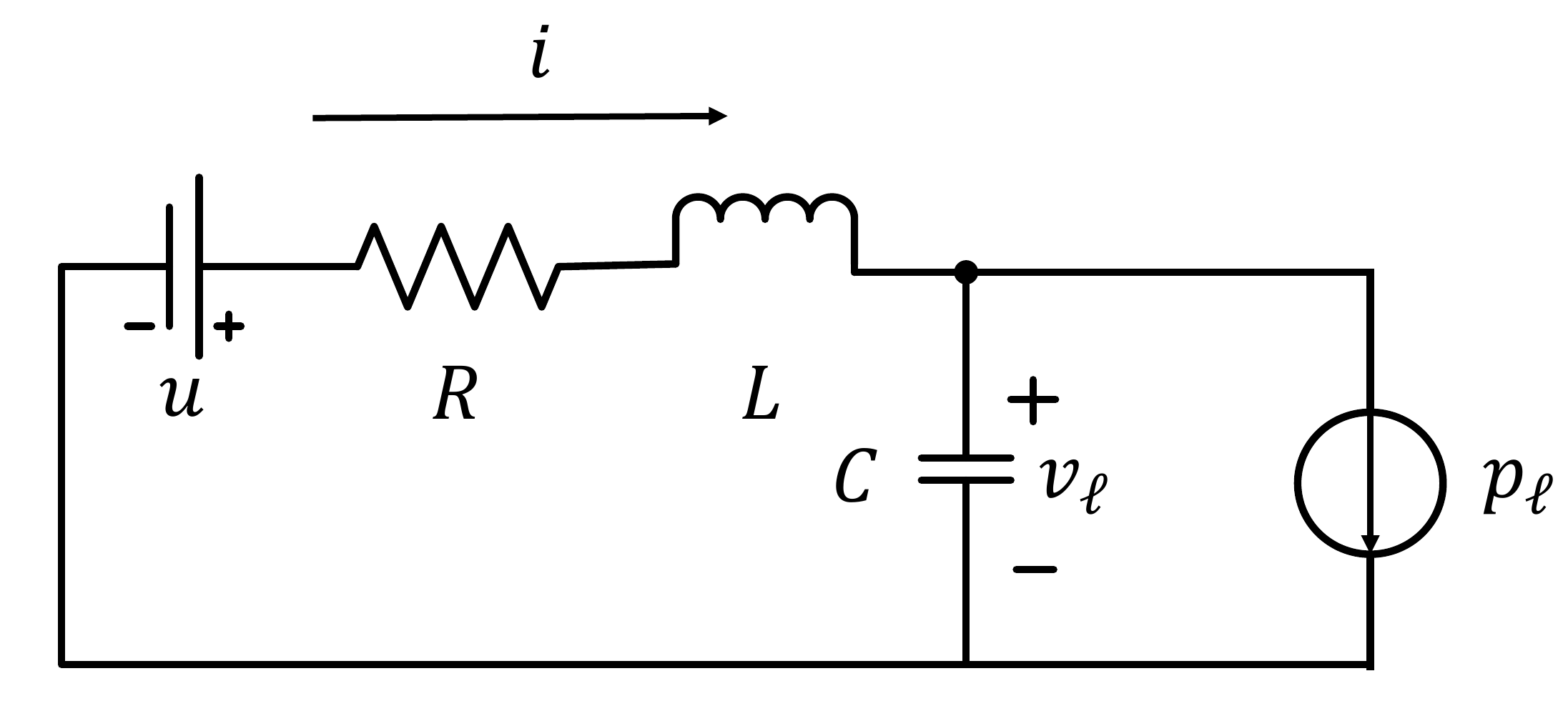}
    \caption{Topology of the one-bus DC microgrid}\label{fig:rev1_topo}
    \end{figure} 
    
    \begin{table}[!ht]
	\caption{Parameters of the one-bus DC microgrid}
	\label{table:rev1_para}
	\begin{center}
		\begin{tabular}{cccccccc}
			\hline\hline
			$R$ & 0.5 $\Omega$ & $L$ & 1 mH & $C$ & 0.5 mF & $p_\ell$ & 300 W\\
			\hline\hline
		\end{tabular}
	\end{center}
    \end{table}

Our analysis finds that the control objective can be achieved if the steady-state voltage is larger than 62.3 V. The control input $u$ is synthesized to be 64.8 V. We use numerical methods to approximately obtain the system ROA and to plot it along with the designated operating range $\Delta \mathcal{X}^\text{o}$ and the minimum sub-level set $\mathcal{S}$ in Fig.~\ref{fig:rev1_ROA}. The following observations are in order: 1) The proposed methodology can successfully ensure the inclusion of the entirety of the designated operating range in the ROA, 2) the lower left corner of the designated operating range is close to the ROA boundary, and 3) the ellipse is visually close to a maximum-volume ellipse centering at the operating point that not only covers the designated operating range but is also a subset of the system ROA. The last two observations qualitatively show that the proposed methodology is reasonably conservative. 

    \begin{figure}[ht!]
    \centering
    \begin{subfigure}[b]{0.4\textwidth}
        \centering
        \includegraphics[width=\textwidth]{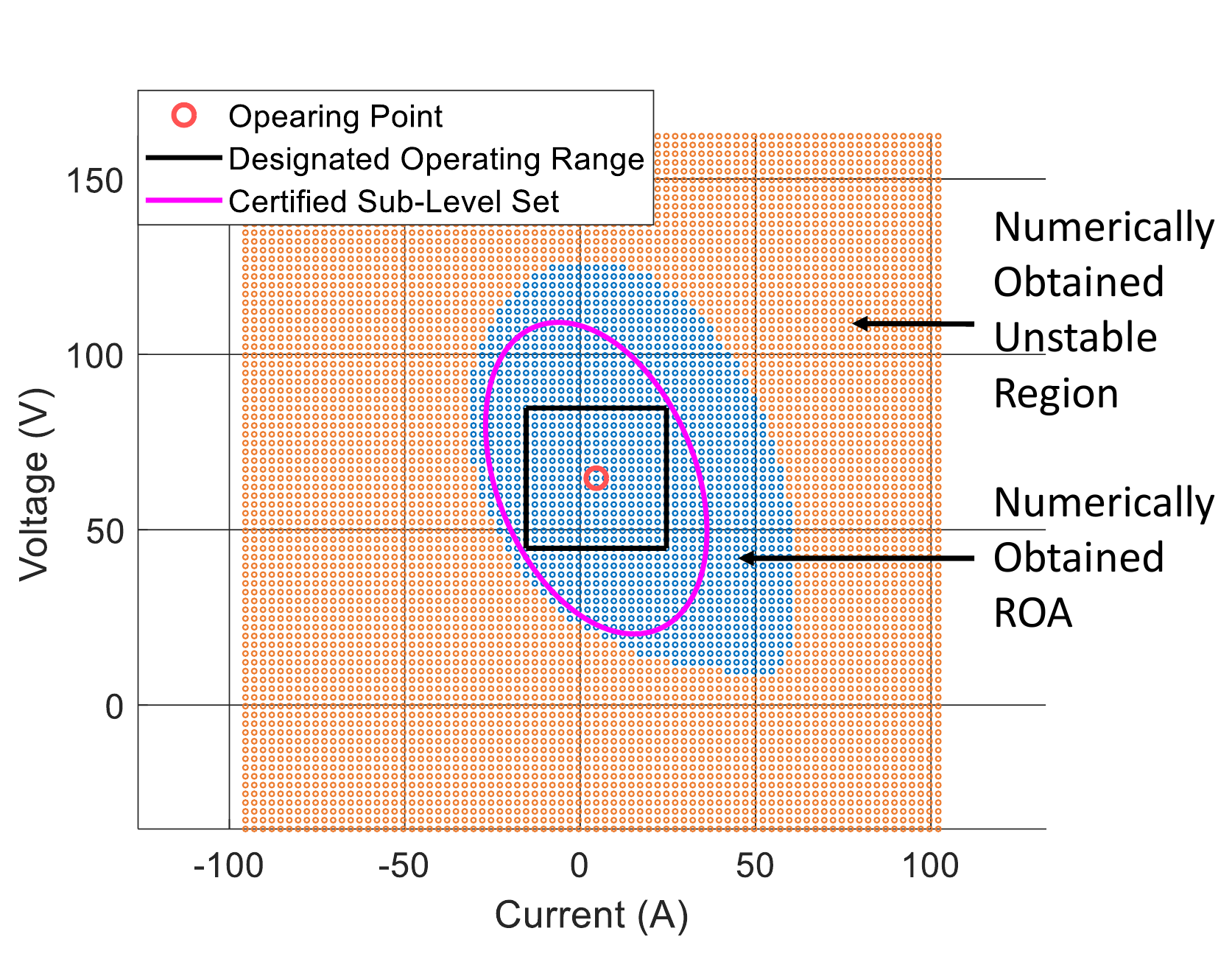}
        \caption{Numerically obtained ROA for one-bus DC microgrid}
    \end{subfigure}
    \hfill
    \begin{subfigure}[b]{0.35\textwidth}
        \centering
        \includegraphics[width=\textwidth]{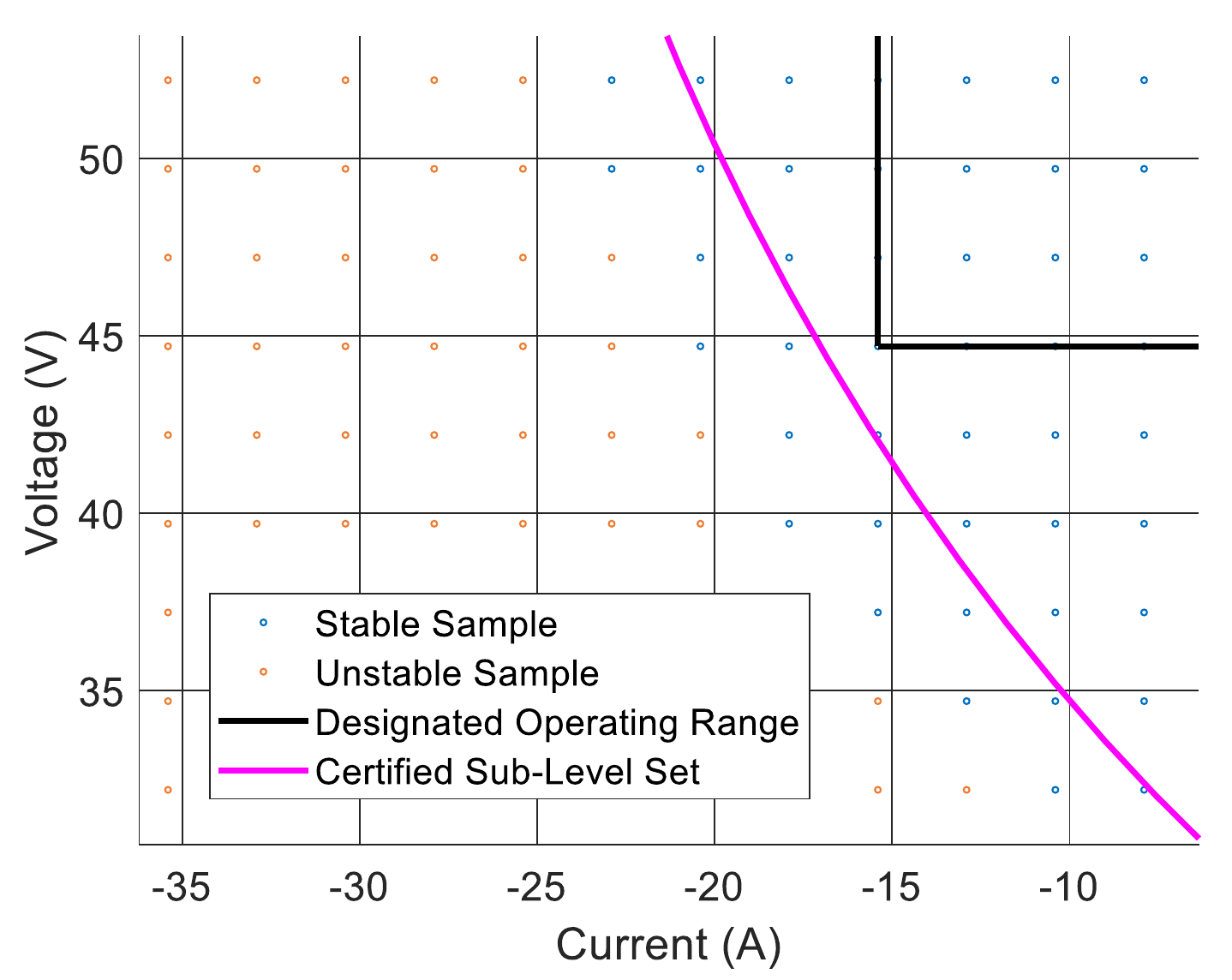}
        \caption{Zoomed in view on lower left boundary}
    \end{subfigure}
    \caption{Numerically obtained DC microgrid ROA}\label{fig:rev1_ROA}
    \end{figure} 
    
We compare the synthesized control with the borderline design, which is the smallest control input that achieves the control objective. Relative differences (RDs) between the synthesized control and the borderline design are used to quantify the conservativeness\footnote{ RD is the ratio of the absolute difference to the borderline design.}. The borderline design is obtained by numerical methods as 59.4 V, which yields a 0.09 RD.

We apply the same methods to a group of scenarios with different designated operating ranges. The results are reported in Table~\ref{table:rev1_sum1}. It can be observed that the quantified conservativeness is consistent across these scenarios.

\subsection{Sensitivity Regarding Parameters}
We further study in this subsection the sensitivity of the proposed methodology with respect to changes in system parameters. We apply the proposed methodology to the one-bus DC microgrid with four groups of scenarios: 1) varying CPL, 2) varying constant impedance loads (CZL) connected in parallel to the CPL, 3) varying inductance, and 4) varying capacitance. The results are given in Tables~\ref{table:rev1_sum4} --~\ref{table:rev1_sum7}. Note that in each group (except for the cases with the superscript ``*'') we only change one parameter relative to those in Table~\ref{table:rev1_para}. Regarding the group with varying CPL, we can observe from Table~\ref{table:rev1_sum4} that the quantified conservativeness is consistent, which shows that our methodology is not sensitive to CPL demands. In Table~\ref{table:rev1_sum5}, the scenario with an $\infty$ CZL represents the base case with only CPL, while in the scenario with 1 $\Omega^*$ CZL, we let the CPL increase to 1 kW from 300 W. We find that with 300 W CPL, the quantified conservativeness drastically increases when the CZL is 1 $\Omega$, but is consistent in other cases. Nonetheless, when the CPL is increased to 1 kW, the quantified conservativeness becomes comparatively consistent. With 300 W CPL, the power consumed by the CZL is around 2.1 kW, whereas it is about 3 kW in the CZL 1 $\Omega$\textsuperscript{*} case. One insight is that if the power consumed by the CZL is dominant over the CPL, the conservativeness of the proposed methodology might increase. From Tables~\ref{table:rev1_sum6} and~\ref{table:rev1_sum7}, we find that the increase of inductance or the decrease of capacitance generally results in increasing the conservativeness of the proposed work. Especially, with 10 mH inductance or 80 $\mu$F capacitance, the conservativeness becomes significantly higher than in other cases. Nonetheless, if in scenarios with the superscript ``*'' we modify the composite resistance to 1 $\Omega$ by increasing the droop gain, the conservativeness returns to a comparatively consistent level. This observation illustrates the potential benefits of designing droop gains along with the droop references. Note that we can make the resistance an additional design variable at the expense that the corresponding optimization problems, such as~\eqref{eq:prob_codesign}, will involve non-convex bilinear terms. Nevertheless, there exists effective methods to tackle bilinear terms~\cite{hassibi1999path,vanantwerp2000tutorial}.
    
    \begin{table}[!ht]
	\caption{Sensitivity study on operating ranges} 
	\label{table:rev1_sum1}
	\begin{center}
		\begin{tabular}{cccc}
			\hline\hline
			Range & (20 A, 20 V) & (10 A, 20 V) & (20 A, 10 V)\\
			Control & 64.8 V & 56.2 V  & 59.7 V \\
			RD & 0.09 & 0.11 & 0.11 \\
			\hline\hline 
		\end{tabular}
	\end{center}
    \end{table}
    
    
    
    \begin{table}[!ht]
	\caption{Sensitivity study on CPL} 
	\label{table:rev1_sum4}
	\begin{center}
		\begin{tabular}{ccccc}
			\hline\hline 
		    CPL	    & 0.3 kW   & 3 kW    & 5 kW    & 10 kW\\
			Control & 64.8 V  & 145.9 V & 181.8 V & 248.1 V\\
			RD      & 0.09    & 0.12    & 0.11    & 0.10\\ 
			\hline \hline\\
		\end{tabular}
	\end{center}
    \end{table}
    
    \begin{table}[!ht]
	\caption{Sensitivity study on CZL}
	\label{table:rev1_sum5}
	\begin{center}
		\begin{tabular}{ccccc}
			\hline\hline 
		    CZL	    & $\infty$   & 100 $\Omega$    & 1 $\Omega$    & 1 $\Omega^*$\\
			Control & 64.8 V  & 64.2 V & 72.8 V & 92.7 V\\
			RD      & 0.09    & 0.08    & 0.27    & 0.11\\ 
			\hline \hline\\
		\end{tabular}
	\end{center}
    \end{table}
    
    \begin{table}[!ht]
	\caption{Sensitivity study on inductance} 
	\label{table:rev1_sum6}
	\begin{center}
		\begin{tabular}{ccccc}
			\hline\hline 
		    $L$	    & 1 mH   & 3 mH      & 10 mH    & 10 mH\textsuperscript{*}\\
			Control & 64.8 V & 97.7 V    & 170.4 V  & 144.1 V\\
			RD      & 0.09   & 0.15      & 0.21     & 0.12 \\ 
			\hline \hline\\
		\end{tabular}
	\end{center}
    \end{table}
    
    \begin{table}[!ht]
	\caption{Sensitivity study on capacitance} 
	\label{table:rev1_sum7}
	\begin{center}
		\begin{tabular}{ccccc}
			\hline\hline 
		    $C$	    & 0.5 mF    & 0.3 mF   & 80 $\mu$F  & 80$\mu$F\textsuperscript{*}\\
			Control & 64.8 V    & 77.2 V   & 135.3 V    & 117.7 V\\
			RD      & 0.09      & 0.12     & 0.19       & 0.11\\ 
			\hline \hline\\
		\end{tabular}
	\end{center}
    \end{table}

\subsection{Case Studies on 14-Bus DC Microgrid}
We then study a 14-bus DC microgrid, as shown in Fig.~\ref{fig:ieee14}. It is analogous to the IEEE 14-bus system with a meshed network~\cite{ieee14}. The generators are all voltage sources operated under primary droop control, and we use a composite resistance to equivalently model the droop gains combined with the physical resistance. Buses 1 and 8 are generator buses, buses 2, 3, and 6 have mixed loads and generators, bus 7 has no loads or generators, and the rest are all load buses; all the loads are CPLs, and the nominal power demand is 10 kW except for those loads at buses 2, 3, and 6, where the demand is 5 kW; the impedance of each power line is represented by a resistor and an inductor connected in series, as indicated in Fig.~\ref{fig:dc_inputs}. The detailed system parameters are listed in Table~\ref{table:simu1_spec}. Note that $c_k$ is the generation cost coefficient, and we consider an equal generation cost; $V_0$ and $p_{\ell 0}$ represent the base voltage and power to obtain per unit (p.u.) values, respectively. 

The control input setpoints and the steady-state voltage are both bounded within [0.9, 1.1] p.u.; each power generation is constrained in the range [0, 3] p.u.; the maximum fluctuation from a steady-state operating point is [-0.18, 0.18] p.u. for each voltage variable and [-0.15, 0.15] p.u. for each current variable, respectively.

\begin{figure}[h!]
\centering
\includegraphics[width=0.5\textwidth]{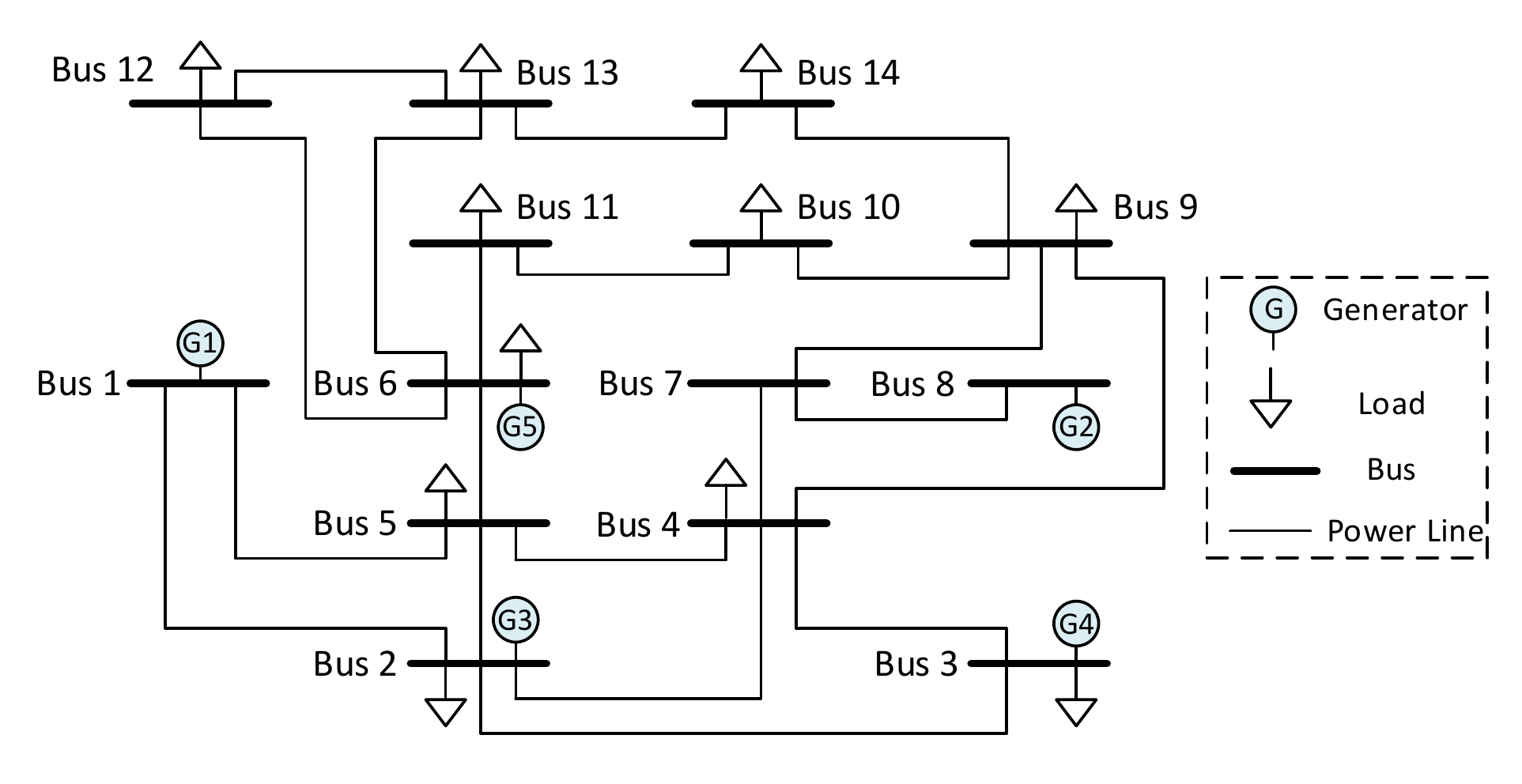}
\caption{Topology of 14-bus DC microgrid}\label{fig:ieee14}
\end{figure} 

\begin{table}[!ht]
	\caption{Parameters of 14-bus DC microgrid} 
	\label{table:simu1_spec}
	\begin{center}
		\begin{tabular}{cccccc}
			\hline\hline
			$R_{sk}$ & 0.1 $\Omega$ & $R_{lj}$ & 5 $\Omega$ & $R_{tp}$ & 0.1 $\Omega$ \\
			$C_{sk}$ & 1 mF & $C_{lj}$ & 1 mF & $L_{tp}$ & 0.8 mH\\
			$V_0$ & 500 V & $p_{\ell 0}$ & 100 kW & $c_k$ & 1 \\
			\hline\hline
		\end{tabular}
	\end{center}
	
\end{table}

We seek to ensure that the system is transiently stable once entering the designated operating range.
To this end, Algorithm~\ref{alg:main} yields voltage setpoints of [1.07, 1.06, 1.06, 1.05, 1.10] p.u. 
We conduct numerical simulations to verify the effectiveness of the proposed methodology. The simulations are initiated from the vertices of the operating range as shown in Fig.~\ref{fig:case0_ini}. Two state variables are selected to plot this two-dimensional figure. Note that ``line 2-3'' represents the power line connecting buses 2 and 3. The simulated system trajectories all converge to the operating point while starting from the boundaries of the operating range. This verifies the correct behavior of the proposed methodology. The CPU time of Steps 2, 3, and 4 of Algorithm~\ref{alg:main} are 0.27 seconds, 0.003 seconds, and 0.14 seconds, respectively. The average computational time for one line-search step in Step 1 is 2.3 seconds. Note that the computational performance can be further improved when commercial solvers and more powerful computational resources are employed.

As for the conservativeness issue, due to the curse of dimensionality, it is computationally intractable to obtain a numerically approximated ROA for this 14-bus system. Nevertheless, one may expect the quantified conservativeness to increase with the system size. The reason is two-fold. 1) As discussed in Section~\ref{sec:tran_stb:non}, we use one LMI in~\eqref{eq:prob_codesign} to replace $m$ LMIs to achieve computational efficiency, but with an increase of system size such replacement may raise the conservativeness. 2) As discussed in Section~\ref{sec:tran_stb:lpv}, we use a uniform scaling factor $\beta$ to assist in finding a maximum-volume $\mathcal{H}$. Such an approximation method may increase the conservativeness with system size as well. 

\begin{figure}[ht!]
\centering
\includegraphics[width=0.4\textwidth]{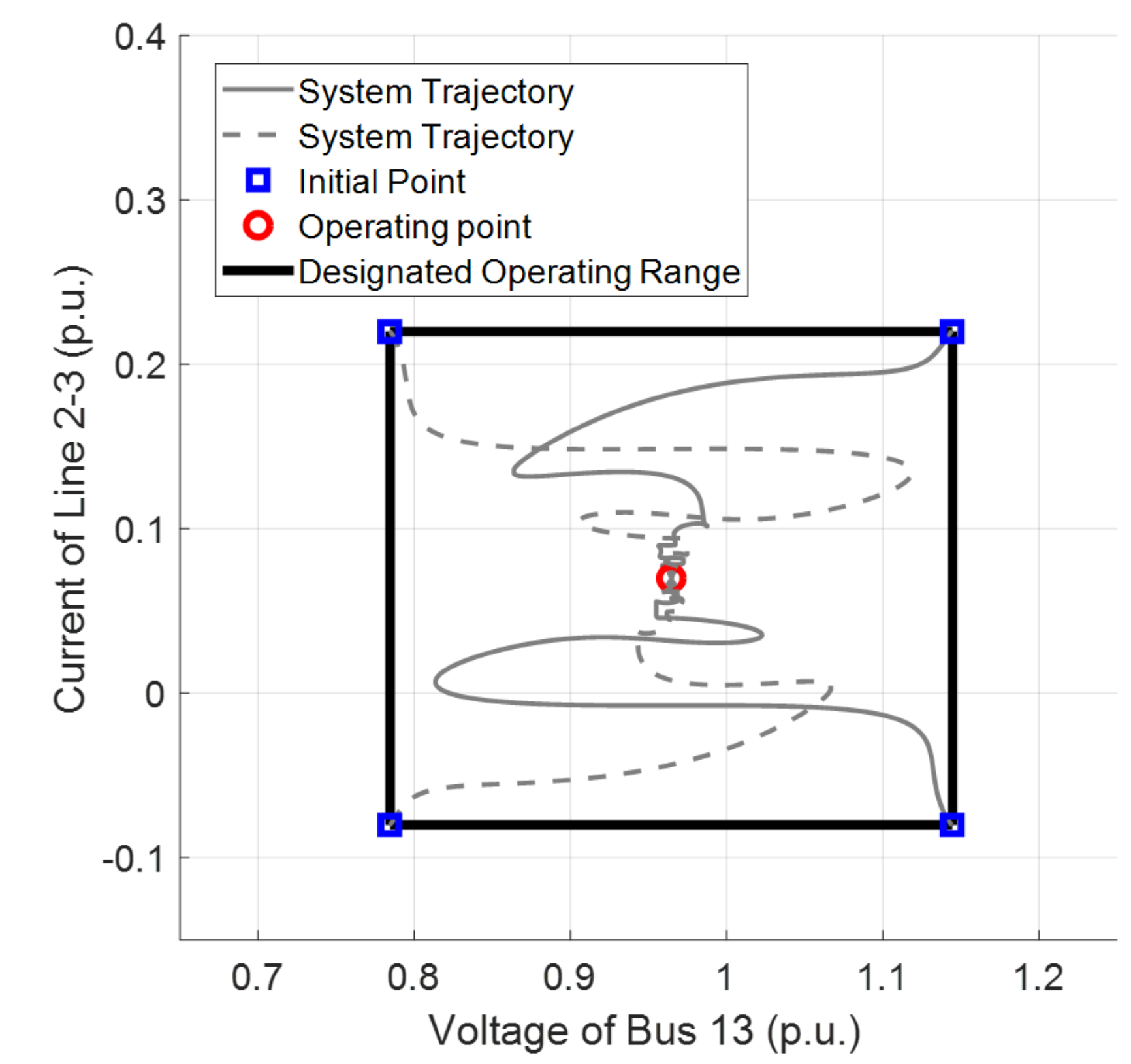}
\caption{Two-dimensional 14-bus DC microgrid trajectories with varying initial conditions}\label{fig:case0_ini}
\end{figure}

\section{Conclusion}~\label{sec:conclu}
This paper develops a novel DC microgrid control synthesis algorithm to guarantee a prescribed region of attraction (ROA). We study the nonlinear dynamics of a general DC microgrid to develop a transient stability condition, which says that an interval set is a subset of the ROA if the system operating point satisfies a linear constraint. A control synthesis problem is then formulated to ensure that this constraint is always satisfied. The problem is structurally similar to a conventional optimal power flow problem, and existing solution algorithms can be applied to solve it efficiently. Simulation case studies show the validity of the proposed work as well as its reasonable conservativeness and computational efficiency. In future work, we will study system parameters' impacts on system transient performance and develop robust control synthesis methods to account for uncertainties in system topology and parameters.

\appendices
\section{Justification for constraint~\eqref{eq:cond23_lyap}}\label{app:con_stb}
\begin{lem}
Matrix $\hat{A}(h)$ is Hurwitz stable for all $h \in \mathcal{H}$ if constraint~\eqref{eq:cond23_lyap} is satisfied for some $P \succ 0$, $\hat{P} = D^{-1}P$, $\tau > 0$ and $T = [\tau]$.
\end{lem}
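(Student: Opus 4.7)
The plan is to verify that $V(\Delta x) \triangleq \Delta x^\top P \Delta x$ is a strict common quadratic Lyapunov function for the family $\{\hat A(h) : h \in \mathcal{H}\}$, using an $S$-procedure relaxation of the sector-type constraint that membership in $\mathcal{H}$ imposes on the ``nonlinearity.'' The key auxiliary variable I would introduce is $y \triangleq [h]C_1\Delta x$, which linearizes the feedback structure and makes the LMI manipulations transparent.

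First, I would rewrite~\eqref{eq:mdl_small_sec} in the feedback form $D \dot{\Delta x} = A\Delta x + B_1 y$ with $y = [h]C_1\Delta x$. The constraint $h_k \in [0,\beta h_{+,k}^{\text{o}}]$ is equivalent, element-wise, to $h_k(\beta h_{+,k}^{\text{o}} - h_k) \geq 0$; multiplying by $(C_1\Delta x)_k^2$ and summing yields the compact sector bound
\begin{equation*}
\beta (C_1\Delta x)^\top [h_+^{\text{o}}]\, y - y^\top y \;\geq\; 0 \quad \text{for every } h \in \mathcal{H}.
\end{equation*}

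Next, I would differentiate $V$ along trajectories. Using $\hat P = PD^{-1}$ and $D^\top = D$, a direct calculation yields $\dot V = \Delta x^\top(\hat P A + A^\top \hat P)\Delta x + 2\Delta x^\top \hat P B_1 y$. Pre- and post-multiplying the LMI~\eqref{eq:cond23_lyap} by $[\Delta x^\top,\, y^\top]$ and its transpose and collecting terms produces
\begin{equation*}
\dot V + \Delta x^\top P \Delta x + \tau\bigl( \beta (C_1\Delta x)^\top [h_+^{\text{o}}] y - y^\top y \bigr) \;\leq\; 0.
\end{equation*}
Since $\tau>0$ and the bracketed quantity is nonnegative for every $h \in \mathcal{H}$ by the sector bound above, the last term drops out with the correct sign, leaving $\dot V \leq -\Delta x^\top P \Delta x < 0$ for all $\Delta x \neq 0$ (here using $P \succ 0$). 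Because this strict decrease holds uniformly in $h \in \mathcal{H}$ with a single $P$, each $\hat A(h)$ is Hurwitz stable, which is the claim.

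The step I expect to be the main obstacle is the symmetry bookkeeping surrounding $\hat P = PD^{-1}$: the LMI is posed in the symmetric pattern $\hat P A + A^\top \hat P$, but $PD^{-1}$ need not be symmetric for an arbitrary $P \succ 0$, because the diagonal $D$ need not commute with $P$. I would address this by interpreting the LMI as a constraint on the symmetric part of $\hat P$ (which is all that enters the quadratic form $\dot V$), or equivalently by restricting the search to $P$ that commute with $D$; either reading makes the derivation above rigorous and leaves the $S$-procedure argument intact. Once that bookkeeping is settled, the remainder is a textbook $S$-procedure combined with a Schur-style expansion of~\eqref{eq:cond23_lyap}.
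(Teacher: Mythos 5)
Your proof is correct and follows essentially the same route as the paper's: the auxiliary variable $y=[h]C_1\Delta x$ is exactly the second block of the paper's vector $z$, your sector bound $\beta(C_1\Delta x)^\top[h_+^{\text{o}}]y - y^\top y \ge 0$ is the paper's inequality~\eqref{eq:s_proc} with the sign flipped, and the S-procedure step against~\eqref{eq:cond23_lyap} is identical. Your remark about the symmetry of $\hat P = PD^{-1}$ is a legitimate bookkeeping point that the paper glosses over, and your proposed fixes are the standard ones.
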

\begin{proof}
Let $h$ be an arbitrary element in $\mathcal{H}$. Let the voltage deviation of the $k$-th CPL bus be \mbox{$\Delta v_{\ell k}$}. Since $h^\text{o}_{+,k}\geq h_k \geq 0$, the $k$-th entry of $[h]Cx$ is bounded by:
\begin{subequations}
    \begin{alignat}{2}
        &\text{if } \Delta v_{\ell k} < 0, \quad  \beta h_{+,k}^\text{o} \Delta v&&_{\ell k} \leq  h_{k} \Delta v_{\ell k} \leq 0,\nonumber\\
        &\text{if } \Delta v_{\ell k} \geq 0,   &&\; 0 \leq  h_{k} \Delta v_{\ell k} \leq \beta h_{+,k}^\text{o} \Delta v_{\ell k},\nonumber
    \end{alignat}
\end{subequations}
where the expressions of upper and lower bounds alternate according to the sign of $\Delta v_{\ell k}$. Hence, the bounds can be expressed into the following compact inequality:
\begin{equation*}
    \left( h_k \Delta v_{\ell k} - \beta h^\text{o}_{+,k} \Delta v_{\ell k}\right)h_k \Delta v_{\ell k} \leq 0. \nonumber
\end{equation*}
It can be expressed in the vector form as follows:
\begin{align}
    &\left( [h]C_1\Delta x - [\beta h^\text{o}_+]C_1\Delta x \right)^\top [h]C_1\Delta x \leq 0.\nonumber
\end{align}
If $z = [\Delta x^\top; ([h]C_1\Delta x)^\top]^\top$, the equation above can be re-arranged as:
\begin{align}
    z^\top
    \left[ 
    \begin{array}{cc}
        0 & -1/2 \beta C_1^\top [h^\text{o}_+] \\
        -1/2 \beta[h^\text{o}_+]C_1  & I 
    \end{array}
    \right] 
    z \leq 0. \label{eq:s_proc}
\end{align}

For the system matrix $\hat{A}(h)$, we wish to show that
\begin{align}
    &\Delta x^\top \left( P\hat{A}(h)+\hat{A}^\top(h)P + P \right) \Delta x \leq 0, \label{eq:con_stb_main}
\end{align}
given $\hat{A}(h) = D^{-1}\left(A + B_1[h]C_1 \right)$,~\eqref{eq:con_stb_main} is equivalent to:
\begin{align}
    z^\top 
    \left[ 
    \begin{array}{cc}
        PD^{-1}A+A^\top D^{-1}P +P & PD^{-1}B_1 \\
        B_1^\top D^{-1}P  & 0 
    \end{array}
    \right] 
    z \leq 0 \nonumber
\end{align}
from s-procedure, inequality~\eqref{eq:con_stb_main} must hold as there exists $P \succ 0$, $\hat{P} = PD^{-1}$, $\tau > 0$ and $T = \tau I$ that make constraint~\eqref{eq:cond23_lyap} satisfied. The proof is complete.
\end{proof}
\section{Justification of Lemma~\ref{lem:infty}}\label{app:lem:infty}
Let the steady-state voltage of the $k$-th CPL bus be $v^{e}_{\ell k}$ and the voltage deviation be \mbox{$\Delta v_{\ell k}$}. For simplicity, we drop the subscript, $\Delta x \in \mathcal{S}$, of the operators $\sup$ and $\inf$. With the above definitions, $h_k$ is given by,
\begin{equation}
    h_{k} = \frac{-p_{\ell k}}{(v^{e}_{\ell k} + \Delta v_{\ell k})v^{e}_{\ell k}}.\nonumber
\end{equation}

($\Longleftarrow$) Suppose there exists $\hat{v}^{e}_{\ell k}$ that makes \mbox{$\hat{v}^{e}_{\ell k}+\inf \Delta v_{\ell k} > 0$}, any $v^{e}_{\ell k} \geq \hat{v}^{e}_{\ell k}$ makes constraint~\eqref{eq:con34_inf} satisfied. Regarding
\begin{equation}
    \sup h_k = \frac{-p_{\ell k}}{(v^{e}_{\ell k} + \inf \Delta v_{\ell k})v^{e}_{\ell k}},\label{eq:sup_indi}
\end{equation}
inequality $\sup h_k \leq \beta h^{\text{o}}_{+,k}$ is satisfied for any $v^e_{\ell k}$ that satisfy:
\begin{align}
     v^{e}_{\ell k} \geq \frac{-\inf \Delta v_{\ell k} + \sqrt{(\inf \Delta v_{\ell k})^2-4p_{\ell k}/(\beta h^{\text{o}}_{+,k})}}{2}.\label{eq:sqrt_indi}
\end{align}

($\Longrightarrow$) We prove by contradiction. Under Assumption~\ref{assum:positive}, suppose that $v^{e}_{\ell k}+\inf \Delta v_{\ell k} \leq 0$ for any $v^{e}_{\ell k} > 0$, and there exists $v^{e}_{\ell k} > 0$ that makes $\sup h_k \leq \beta h^\text{o}_{+,k}$. 

With the above statement, there exists $\Delta x \in \mathcal{S}$ such that $\Delta v_{\ell k} = -v^{e}_{\ell k}$ for any $v^{e}_{\ell k} > 0$, because $\mathcal{S}$ is a convex ellipsoid. 

As a result, for any finite $v^{e}_{\ell k} > 0$ when $\Delta v_{\ell k} = -v^{e}_{\ell k}$, $h_k = \infty > \beta h^\text{o}_{+,k} \geq \sup h_k$ . This is a contradiction.

\section{Justification of Inequality~\eqref{eq:con34_sup_ln}}\label{app:sqrt}
Inequality~\eqref{eq:con34_sup_qd} is the vector-form representation of~\eqref{eq:sup_indi}, and~\eqref{eq:con34_sup_ln} is the vector form representation of~\eqref{eq:sqrt_indi}. Note that the right-hand-side of~\eqref{eq:sqrt_indi} is the positive solution of the following quadratic equation:
\begin{equation}
    \left( v^{e}_{\ell k} \right)^2 + \inf \Delta v_{\ell k} v^{e}_{\ell k} + \frac{p_k}{\beta h^\text{o}_k} = 0.\nonumber
\end{equation}
This quadratic equation has an negative solution as well. When $v^{e}_{\ell k}$ is smaller than the negative solution or greater than the positive solution, inequality~\eqref{eq:sup_indi} is satisfied. Under Assumption~\ref{assum:positive}, we only need to study the latter case. 

\section{Example System model}\label{app:mdl}
System matrices for the one-bus DC microgrid shown in Section~\ref{sec:onebus} are given as follows:
\begin{subequations}
    \begin{align}
        &A = \left[ 
        \begin{array}{cc}
            -R & -1 \\
            1 & 0
        \end{array}
        \right], B_1 = \left[ 
        \begin{array}{c}
             0  \\
             1 
        \end{array}
        \right], \nonumber \\
        &B_2 = \left[ 
        \begin{array}{c}
             1  \\
             0 
        \end{array}
        \right], D = \left[ 
        \begin{array}{cc}
            L &  \\
             & C
        \end{array}
        \right].\nonumber
    \end{align}
\end{subequations}

\bibliographystyle{IEEEtran}
\bibliography{DCtransient}

\end{document}